\newtheorem{proposition}{Proposition}
\newtheorem{lem}{Lemma}
\newtheorem{teo}{Theorem}
\newcommand{\p}{\mathbb{P}}
\newcommand{\e}{\mathbb{E}}
\def\indi{\mathbf{1}}
\def\r{\mathbb{R}}
\def\n{\mathbb{N}}
\def\z{\mathbb{Z}}
\def\indi{\mathbf{1}}
\newcommand{\ud}[1]{\, \mathrm{d}#1}
\newcommand{\Exp}[1]{\mathbb{E}\left[#1\right]}
\newcommand{\Expcon}[2]{\mathbb{E}\left[#1\mid #2\right]}
\newcommand{\expo}[1]{\exp\left\{#1\right\}}
\begin{document}

\title[Yaglom's limit for Galton-Watson in varying environment]{ Yaglom's limit for critical Galton-Watson processes in varying environment: A probabilistic approach}
 
\author{Natalia Cardona-Tob\'on}
\address{Centro de Investigaci\'on en Matem\'aticas. Calle Jalisco s/n. C.P. 36240, Guanajuato, M\'exico}
\email{natalia.cardona@cimat.mx}
 
\author{Sandra Palau}
\address{IIMAS, Universidad Nacional Aut\'onoma de M\'exico. CDMX, 04510, Ciudad de M\'exico, M\'exico}
\email{sandra@sigma.iimas.unam.mx}
 
 
\maketitle

\begin{abstract}
A Galton-Watson process in varying environment is a discrete time branching process where the offspring distributions vary among generations. Based on a two-spine decomposition technique, we provide a probabilistic argument of a Yaglom-type limit for this family processes. The result states that, in the critical case, a suitable normalisation of  the process conditioned on non-extinction converges in distribution to a standard exponential random variable. Recently, this result has been established by Kersting   [{\it J. Appl. Probab.} {\bf57}(1), 196--220, 2020] using analytic techniques.
\bigskip

\noindent {\bf Key words and phrases}:  Galton-Watson processes; varying environment;  Yaglom's limit; spines decompositions.
 
\medskip
 
\noindent {\bf MSC 2010}: 60J80;  60F05;  60K37.
\end{abstract}

\section{Introduction}
A Galton-Watson process in varying environment (GWVE) is a discrete time branching process where the offspring distributions vary among generations, in other words individuals give birth independently and their offspring distributions coincide within each generation. More precisely, a {\it varying environment} is a sequence \ $Q=(q_1,q_2,\ldots)$ \ of probability measures on \ $\n_0=\{0,1,2,\ldots\}$. \  A  {\it Galton-Watson process} \ $Z^Q = \{Z_n^Q: n\geq 0\}$ \ {\it in  a varying environment} \ $Q$ \ is a Markov chain defined recursively as follows 
$$ Z_0^Q=1\quad \mbox{ and } \quad Z_n^Q
=\underset{i=1}{\overset{ Z^Q_{n-1}}{\sum}}\chi_{i}^{(n)}, \qquad n\geq 1,$$
where \  $\{\chi_{i}^{(n)}: i,n\geq 1\}$ \ is a sequence of independent random variables
such that
$$\p(\chi_{i}^{(n)}=k)=q_n(k), \qquad \quad k\in \n_{0},\ i,\ n\geq 1.$$ 
The variable  \ $\chi_{i}^{(n)}$ \ denotes the offspring of the \ $i$-th individual in the \ $(n-1)$-th generation. Its generating function is given by 
$$ f_n(s):= \Exp{s^{\chi_{i}^{(n)}}}=\sum_{k=0}^{\infty} s^k q_n(k), 
\qquad 0 \leq s\leq 1, \ n\geq 1.$$ 

Hence, by applying the branching property recursively, we deduce that the generating function of \ $Z_n^Q$ \ is given in terms of \ $(f_1, f_2,\ldots)$ \ as follows 
\begin{equation}\label{laplace Z}
\Exp{s^{Z_n^Q}}=f_1\circ\cdots\circ f_n(s), 
\qquad 0 \leq s\leq 1, \ n\geq 1,
\end{equation}
where \ $f\circ g$ \ denotes the composition of \ $f$ \ with \ $g$.\

\noindent Moreover, by differentiating in \ $s$,  \ we obtain
\begin{equation}\label{eq_media_Zn}
\e[Z_n^Q]=\mu_n,\qquad  \mbox{and} \qquad
\frac{\e[Z_n^Q(Z_n^Q-1)]}{\e[Z_n^Q]^2}= \sum_{k=0}^{n-1}\frac{\nu_{k+1}}{\mu_{k}},
\qquad n\geq 1,
\end{equation}
where \ $\mu_0:=1$\  and for any \ $n\geq 1$,
\begin{equation}\label{defi mu}
\mu_n:= f_1'(1)\cdots f_n'(1), \qquad 
\mbox{ and }\qquad \nu_n:= \frac{f_n''(1)}{f_n'(1)^2}=\frac{\text{Var}\left[\chi_{i}^{(n)}\right]}{\Exp{\chi_{i}^{(n)}}^2}+\left(1-\frac{1}{\Exp{\chi_{i}^{(n)}}}\right),
\end{equation}
where \ Var$\left[\chi_{i}^{(n)}\right]$ \ is the variance of the variable. 
For further details about GWVEs, we refer to the monograph of Kersting and Vatutin \cite{kersting2017discrete}.

According with Kersting, \cite{kersting2017unifying}, we say that a  GWVE is {\it regular} if there exists a constant \ $c>0$ \ such that for all \ $n\geq 1$,
$$\Exp{(\chi_{i}^{(n)})^2 \indi_{\{\chi_{i}^{(n)}\geq 2\}}} \leq c \Exp{\chi_{i}^{(n)} \indi_{\{\chi_{i}^{(n)}\geq 2\}}} \Exp{\chi_{i}^{(n)}\left| \chi_{i}^{(n)}\geq 1\right.}.$$
He proved that a regular GWVE has extinction a.s (i.e. \ $\p( Z_n^Q=0 \mbox{ for some }n)=1$) \ if and only if 
\  $  \sum_{k=0}^{\infty}\frac{\nu_{k+1}}{\mu_{k}}=\infty$ \ or 
\  $\mu_n\rightarrow 0$\ as\ $n\to \infty$,  \cite[ Theorem 1]{kersting2017unifying}. \ 
In addition, he gave the following classification. \\

A regular GWVE is 
\begin{enumerate}
	\item[i.] {\it supercritical} if and only if	
	\ $\underset{k=0}{\overset{\infty}\sum} \frac{\nu_{k+1}}{\mu_{k}}<\infty$ \ and 
	\ $\underset{n\rightarrow\infty}{\lim}\mu_n =\infty$,  
	\item[ii.] {\it asymptotically  degenerate} if and only if \ $\underset{k=0}{\overset{\infty}\sum} \frac{\nu_{k+1}}{\mu_{k}}<\infty$ \ and 
	\ $0<\underset{n\rightarrow\infty}{\lim}\mu_n< \infty$, 
	\item[iii.] {\it critical} if and only if
	\ $\underset{k=0}{\overset{\infty}\sum} \frac{\nu_{k+1}}{\mu_{k}}=\infty$ \ and 
	\ $\underset{n\rightarrow\infty}{\lim}\mu_n\underset{k=0}{\overset{n-1}\sum} \frac{\nu_{k+1}}{\mu_{k}}=\infty$,   
	\item[iv.] {\it subcritical } if and only if 
	\ $\underset{n\rightarrow\infty}{\liminf\mu_n}=0$ \ and
	\ $\underset{n\rightarrow\infty}{\liminf}\mu_n\underset{k=0}{\overset{n-1}\sum} \frac{\nu_{k+1}}{\mu_{k}}<\infty$.
\end{enumerate}

Kersting's definition is an extension of the classical categorisation of branching processes. Indeed, when the environment is constant, we have \ $\mu_k=\mu^k$\  and \ $\nu_k=\sigma^2/\mu^2 + (1-1/\mu)$,\  for \ $k\geq 1$, \ where \ 
$\mu$\  and \ $\sigma^2$ \ are the mean and variance of the offspring distribution, respectively; we recover the original classification. We observe that in this case, the asymptotically degenerate case is not possible. 

Given a varying environment \ $Q$,\  we  define the sequence \ $\{a_n^Q: n\geq 0\}$ \ as follows 
\begin{equation*}
a_0^Q=1, \qquad \mbox{and}\qquad a_n^Q= \frac{\mu_n}{2}\sum_{k=0}^{n-1}\frac{\nu_{k+1}}{\mu_{k}},\qquad n\geq 1.
\end{equation*}
Kersting, \cite[Theorem 4]{kersting2017unifying}, showed that in the critical regime, \ $a_n^Q\rightarrow \infty$ \ and that
\begin{equation}\label{limit a}
\lim\limits_{n\to \infty}\frac{a_n^Q}{\mu_n} \p(Z_n^Q>0)=1. 
\end{equation}
This asymptotic behavior is  a generalization of Kolmogorov's theorem for Galton-Watson processes with constant environment (see \cite{kolmogorov1938losung}).

In the rest of the paper, we work with regular critical GWVE. Further, we assume the following condition 
\begin{equation}\label{eq_cond_mild}\tag{\bf{A}}
\mbox{there exists } c>0 \mbox{ such that } \ 
f_n'''(1)\leq c f_n''(1)(1+ f_n'(1)), \ \mbox{ for any } n\geq 1.
\end{equation} 

Kersting proved that this condition implies that the GWVE is regular,  see \cite[Proposition 2]{kersting2017unifying}. Moreover, he explained that Condition \eqref{eq_cond_mild} is a rather mild condition. Indeed, it is satisfied by most common probability distributions, for instance the Poisson, binomial, geometric, hypergeometric,  and  negative binomial distributions. Another important example satisfying Condition \eqref{eq_cond_mild} are random variables that are a.s. uniformly bounded by a constant.

We are ready to present our main result, which is in accordance with  Yaglom's theorem for classical Galton-Watson processes. 

\begin{teo}[Yaglom's limit]\label{teo_yaglom}
	Let \ $\{Z_n^Q: n\geq 0\}$ \ be a critical GWVE that satisfies Condition \eqref{eq_cond_mild}. Then
	\begin{equation*}
	\left(\frac{Z_n^Q}{a_n^Q}; \p(\ \cdot\ | Z_n^Q > 0)\right) \stackrel{(d)}{\longrightarrow}  \left(Y; \p\right) , \qquad \mbox{ as }\ n\rightarrow \infty,
	\end{equation*}
	where \ $Y$ \ is a standard exponential random variable.
\end{teo}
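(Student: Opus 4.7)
The plan is to establish convergence of the conditional Laplace transform: for every $\lambda>0$, show
\[
\e\!\left[e^{-\lambda Z_n^Q/a_n^Q}\,\bigm|\,Z_n^Q>0\right]\longrightarrow \frac{1}{1+\lambda}\quad\text{as } n\to\infty,
\]
which identifies the limit as standard exponential. Writing this conditional expectation as $1-\e[1-e^{-\lambda Z_n^Q/a_n^Q}]/\p(Z_n^Q>0)$ and applying the many-to-one identity $\e[X g(X)]=\e[X]\,\e[g(\tilde X)]$ to the bounded continuous function $\phi_\lambda(x):=(1-e^{-\lambda x})/x$, I obtain
\[
\e\!\left[1-e^{-\lambda Z_n^Q/a_n^Q}\right]=\frac{\mu_n}{a_n^Q}\,\e\!\left[\phi_\lambda\!\bigl(\tilde Z_n^Q/a_n^Q\bigr)\right],
\]
where $\tilde Z_n^Q$ denotes the size-biased version of $Z_n^Q$. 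Combined with Kersting's Kolmogorov asymptotic \eqref{limit a}, this reduces the theorem to proving that
\[
\e\!\left[\phi_\lambda\!\bigl(\tilde Z_n^Q/a_n^Q\bigr)\right]\longrightarrow \int_0^\infty \phi_\lambda(u)\,u e^{-u}\,\mathrm du=\frac{\lambda}{1+\lambda},
\]
and since $\phi_\lambda$ is bounded and continuous on $[0,\infty)$ this will follow once I show that $\tilde Z_n^Q/a_n^Q$ converges in distribution to a Gamma$(2,1)$ variable (the size-biasing of the standard exponential).

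\textbf{One-spine representation.} The process $\tilde Z_n^Q$ admits the standard spine construction: along a distinguished lineage the individual at generation $k-1$ reproduces with the size-biased offspring law $\p(\tilde\chi^{(k)}=j)=jq_k(j)/f_k'(1)$, one of the $\tilde\chi^{(k)}$ children chosen uniformly at random becomes the new spine, and the remaining $\tilde\chi^{(k)}-1$ children initiate independent ordinary GWVEs in the shifted environment $(q_{k+1},q_{k+2},\dots)$. Consequently,
\[
\tilde Z_n^Q=1+\sum_{k=1}^{n}\sum_{j=1}^{\tilde\chi^{(k)}-1} Z_{k,n}^{(j)},
\]
where the $Z_{k,n}^{(j)}$ are independent copies of a GWVE launched at generation $k$ by a single individual. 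A direct calculation gives $\e[\tilde Z_n^Q]=1+2a_n^Q$, which matches the mean of Gamma$(2,1)$ after the normalisation by $a_n^Q$.

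\textbf{Two-spine decomposition.} To identify the distributional limit I would introduce a second distinguished particle sampled uniformly among the generation-$n$ individuals of $\tilde Z_n^Q$ and trace its ancestry back. This yields a tree carrying two spines that coalesce at some random generation $K_n\in\{0,\dots,n\}$, whose law is explicitly encoded by the weights $\{\nu_{k+1}/\mu_k\}$. Before $K_n$ there is a single trunk; after $K_n$ the two spines, issued from two distinct children of the coalescing ancestor, each evolve as an independent size-biased GWVE in the environment $(q_{K_n+1},\dots,q_n)$. In the critical regime the law of $K_n$ concentrates on those generations for which the two post-coalescence subtrees have comparable sizes of order $a_n^Q$, and, conditional on $K_n$, each subtree mass rescaled by $a_n^Q$ converges to an independent standard exponential; their sum then delivers the Gamma$(2,1)$ limit of $\tilde Z_n^Q/a_n^Q$.

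\textbf{Main obstacle.} The technical heart of the argument lies in this last convergence. One must show that the ``bush'' contributions coming from generation-$k$ siblings of the two spines with $k\ne K_n$ are negligible after renormalisation by $a_n^Q$, uniformly in $K_n$, and that the two post-coalescence subtree masses concentrate around their conditional means. Both reduce to $L^2$-type estimates, uniform in $k\le n$, on the rescaled subtree sizes $Z_{k,n}^{(j)}/a_n^Q$ and on the size-biased offspring increments $\tilde\chi^{(k)}-1$. This is precisely where Condition~\eqref{eq_cond_mild} enters: the third-moment bound $f_k'''(1)\le c f_k''(1)(1+f_k'(1))$ translates into a uniform-in-$k$ control of $\e[(\tilde\chi^{(k)})^2]$ in terms of $f_k'(1)$ and $\nu_k$, which combined with the Kolmogorov-type asymptotic applied to the shifted environment yields both the tightness of $\tilde Z_n^Q/a_n^Q$ and the concentration estimates needed to pass to the Gamma$(2,1)$ limit in the Laplace transform above.
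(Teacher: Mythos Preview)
Your reduction to showing $\dot Z_n^Q/a_n^Q\Rightarrow\dot Y\sim\mathrm{Gamma}(2,1)$ is correct and matches the paper's Lemma~\ref{remark 1}. The two-spine heuristic you sketch, however, has a structural gap: it is circular. The post-coalescence subtrees are themselves size-biased GWVEs in the shifted environment $Q_{K_n+1}$, so asserting that each (after normalisation) converges to \emph{anything} presupposes precisely the statement you are trying to prove, merely in a shifted environment. You give no mechanism to close this loop. The paper closes it by a Gr\"onwall-type bootstrap: setting $M(\lambda):=\limsup_n\bigl|\e[e^{-\lambda\dot Y}]-\e[e^{-\lambda\dot Z_n^Q/a_n^Q}]\bigr|$, the two-spine decomposition (Proposition~\ref{prop_desco_Zdosptos}) together with the distributional identity $\ddot Y\stackrel{d}{=}\dot Y+U\dot Y'$ characterising the exponential (equation~\eqref{eq_expone}) yields the self-referential inequality $M(\lambda)\le 2\int_0^1\int_0^\lambda M(us)\,\ud s\,\ud u$, whence $M\equiv 0$ by Lemma~\ref{lemmaRen3.2}. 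This fixed-point step is the heart of the proof and is entirely absent from your outline.

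There is also a confusion of objects in your decomposition. Sampling a second particle uniformly from the size-biased tree does not give you another copy of $\dot Z_n^Q$: it produces (up to the diagonal) the $Z_n^Q(Z_n^Q-1)$-biased tree, whose generation-$n$ mass is $\ddot Z_n^Q$ and whose scaling limit is $\ddot Y\sim\mathrm{Gamma}(3,1)$, not $\mathrm{Gamma}(2,1)$. Correspondingly, the ``trunk'' contribution---the bushes attached to the common spine at generations $k\le K_n$---is \emph{not} negligible after division by $a_n^Q$; in the limit it carries mean $1$, and it is exactly what distinguishes the full one-spine mass $\dot Z_n^Q$ from the post-$K_n$ one-spine mass $\dot Z_{n-(K_n+1)}^{Q_{K_n+1}}$ in the paper's identity $\ddot Z_n^Q\approx\dot Z_n^Q+\dot Z_{n-(K_n+1)}^{Q_{K_n+1}}$. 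So the ``main obstacle'' you name---uniform $L^2$ control making the bushes vanish---is the wrong target. The paper never invokes such second-moment estimates; it works purely with Laplace transforms, the uniform convergence $g(n,m,s/a_n^Q)\to 1$ of Lemma~\ref{lemma g}, and the convergence $A_{n,K_n}\stackrel{d}{\to}U$ of Proposition~\ref{Prop_unifor}, with Condition~\eqref{eq_cond_mild} entering only through Lemma~\ref{lemma g} and the bound $\|P^{(n)}\|\to 0$.
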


In the classical theory with constant environment, this result has several proofs, the first one was given  by Yaglom \cite{yaglom1947certain}. In \cite{ lyons1995conceptual},  a probabilistic proof via a characterisation of the exponential distribution was presented. Later on, Geiger characterised the exponential random variable by a distributional equation and  he presented another proof of Yaglom's limit based on that equation (see  \cite{geiger1999elementary,geiger2000new}).   Recently, Ren et al. \cite{ren20172}, developed yet another new proof using a two-spine decomposition technique.

When the environment is varying, Jagers \cite{jagers1974galton} proved the  convergence under extra assumptions. Afterwards, Bhattacharya and Perlman \cite{bhattacharya2017time} obtained the same result with  weaker assumptions than Jagers (but stronger than ours).  Kersting  \cite{kersting2017unifying}  provided yet another proof in a similar framework to ours, that we will explain below. An extension in the presence of immigration and the same setting as Kersting's has been established in \cite{gonzalez2019branching}.  A multi-type version with analogous  assumptions as  Kersting's can be found in  \cite{dolgopyat2018}.  All these authors established the exponential convergence using an analytical approach. The condition in Kersting \cite{kersting2017unifying} is the following.  For every \ $\epsilon >0$ \ there is a constant \ $c_\epsilon< \infty$ \ such that 
\begin{equation*}
\Exp{(\chi_{i}^{(n)})^2\indi_{\left\{\chi_{i}^{(n)} > c_\epsilon(1 + \e[\chi_{i}^{(n)}])\right\}}}\leq \epsilon \Exp{(\chi_{i}^{(n)})^2\indi_{\{  \chi_{i}^{(n)} \geq 2\}}},\qquad\qquad \mbox{for any } n\geq 1.
\end{equation*}
He explained that a direct verification of his assumptions can be cumbersome. Therefore, he introduced  Condition \eqref{eq_cond_mild} as an assumption easier to handle that implies the latter condition. For this reason, we  prefer to work directly under the Assumption \eqref{eq_cond_mild}, which is good enough for our purposes. 

In this manuscript, we give a probabilistic argument of  Yaglom's limit for GWVE. It is based on a two spine decomposition method and a characterisation of the exponential distribution via a size-biased transform and is close in spirit to that of \cite{ren20172}. A one-spine decomposition is already known in the literature (see \cite[Section 1.4]{kersting2017discrete}). We believe that it is possible to use a one-spine decomposition to prove Yaglom-type limit for GWVE, but we could not find a proof with this approach in the literature.  However, we decided to tackle the proof with a two spines decomposition. The reason comes from the classical theory of a Galton-Watson process in constant environment. Consider the most recent common ancestor (MRCA) of the particles at generation \ $n$. \ When the environment is constant, Geiger \cite{geiger2000new} showed that conditioned on the event of non-extinction at generation \  $n$,\   asymptotically there are exactly two children
of the MRCA with at least one descendant at generation \ $n$. Based on this intuition, it is natural to consider a two spine decomposition whose spines correspond to the genealogical lines of these two individuals.

The authors in \cite{ren20172} created a two-spine decomposition technique for Galton-Watson processes in constant environment that cannot be translated directly into our settings. Here, associated to each \ $Z_n^Q$, \ we construct a Galton-Watson tree in varying environment up to time \ $n$ \ with two marked genealogical lines. This tree can be decomposed in subtrees along these lines. A key point is the distribution  of the generation of the most recent common ancestor of these genealogical lines, \ $K_n$.  \ When the environment is constant, \ $K_n$ \ has uniform distribution in \ $\{0,\dots, n-1\}$ and the subtrees are independent Galton-Watson trees.  When the environment varies, this last property does not hold anymore.  In order to match the above decomposition  with that at the exponential distribution, it is fundamental to know the law of \ $K_n$ \ explicitly. Thus,  we determine the distribution of  \ $K_n$ \ that makes the method work. Moreover, we identify the subtrees with Galton-Watson trees in a modified environment. In the next section, we explain this in further detail.

Our contribution is that our proof provides further understanding on why the limit must be an exponential random variable.  An important part of our approach is in studying random trees and being able to adequately select inside them two marked genealogical lines. We believe that one can adapt this decomposition technique to establish a Yaglom-type limit for branching processes in random environment, i.e., when the environment is given by a sequence of random probability measures on $\n_0$. If the random environment is an i.i.d. sequence of probability measures, the Yaglom-type limit theorem under a quenched approach  is known in the literature \cite[Theorem 6.2]{kersting2017discrete}. In particular, they showed that when the environment is given by linear fractional distributions, the Yaglom-type limit is  an exponential random variable.  Then, for these and other distributions the construction has to be the same but, for the two genealogical lines, one has to find the distribution of the generation of their most recent common ancestor that makes the method work. Furthermore, by using the approach of several spines decomposition it would be possible to study the genealogy of Galton-Watson processes in varying environment. For the moment this technique has only been done  in the constant environment case (see \cite{harris2020coalescent}). These possible applications highlights the potential and relevance of our methodology.

The remainder of the paper is organised as follows. In  Section 2, we   introduce the one-spine and two-spine decompositions. With this in hand, we  give an intuitive explanation of the result and we explain why the limit must be exponential. In Section 3, we give some properties of the  measures associated with these decompositions and we characterise them via their Laplace transform. Finally,  Section 4 contains the proof.

\section{Outline of the proof}\label{sec_outline}
In this section, we  provide an intuitive explanation of the result and  explain why the limit must be an exponential random variable.  First, we explain the one-spine and two-spines decompositions. Then, we  relate them with a size-biased characterisation of the exponential random variable.

Recall that given a random variable \ $X$\  and a Borel function \ $g$\  such that  \ $\p(g(X)\geq 0)=1$, \ and \ $\e[g(X)]\in (0,\infty)$, \ we say that \ $W$ \ is a \ {\it  $g(X)$-transform} of \ $X$ \ if 
\begin{equation*}
\e[f(W)] = \frac{\e[f(X)g(X)]}{\e[g(X)]},
\end{equation*}
for each positive Borel function \ $f$.\  If \ $g(x)=x$, \ we also call it the { \it size-biased transform. }

Observe that the law of a non-negative random variable \ $X$ \ conditioned on being strictly positive can be described in terms of its size-biased transform. More precisely, for each \ $\lambda\geq 0$,
\begin{equation}\label{conditionated}
\e\left[1-e^{-\lambda X}\mid X>0 \right] =\int_0^{\lambda} \frac{\e\left[Xe^{-sX}\right]}{\p(X>0)}\ud s= \e\left[X \mid X > 0\right] \int_{0}^{\lambda} \e\left[e^{-s\dot{X}}\right]\ud s,
\end{equation}
where \ $\dot{X}$ \ is the size-biased transform of \ $X$. \ Recall that a sequence of non-negative random variables converges in distribution if and only if their Laplace transforms converge. As a consequence, we obtain the following lemma 

\begin{lem}\label{remark 1}
	Let $\{X_n: n\geq 0\}$ be a sequence of non-negative random variables. Then 
	the variables conditioned on being strictly positive \ $\{X_n\ ;  \p(\cdot\mid X_n>0)\}_{n\geq 0}$  \ converge in distribution to a strictly positive random variable \ $Y$ \ if and only if \ $\Expcon{X_n}{X_n>0}\rightarrow \Exp{Y}$ \  and \ $\dot{X}_n$ \ converges in distribution to \ $\dot{Y}$, \ where \ $\dot{X}_n$ \  and \ $\dot{Y}$ \ are the size-biased transforms of \ $X_n$ \ and \ $Y$, respectively.
\end{lem}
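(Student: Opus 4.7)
The plan is to reduce the equivalence to the algebraic identity in equation \eqref{conditionated}, which applied to $X = X_n$ and to $X = Y$ (using $Y > 0$ a.s., so conditioning on $\{Y > 0\}$ is trivial) yields, for every $\lambda \geq 0$,
\begin{equation*}
1 - \Expcon{e^{-\lambda X_n}}{X_n>0} = \Expcon{X_n}{X_n>0}\int_0^\lambda \Exp{e^{-s\dot{X}_n}}\ud s, \qquad 1 - \Exp{e^{-\lambda Y}} = \Exp{Y}\int_0^\lambda \Exp{e^{-s\dot{Y}}}\ud s.
\end{equation*}
By the continuity theorem for Laplace transforms of non-negative random variables, convergence in distribution of $X_n \mid X_n > 0$ to $Y$ (respectively of $\dot{X}_n$ to $\dot{Y}$) is equivalent to pointwise convergence of the corresponding Laplace transforms on $[0,\infty)$. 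The claim therefore reduces to a statement about convergence of the two factors on the right-hand sides.

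For the $(\Leftarrow)$ direction---the one actually invoked in the proof of Theorem \ref{teo_yaglom}---assume $\Expcon{X_n}{X_n>0}\to \Exp{Y}$ and $\dot{X}_n \stackrel{(d)}{\longrightarrow} \dot{Y}$. The latter gives $\Exp{e^{-s\dot{X}_n}} \to \Exp{e^{-s\dot{Y}}}$ pointwise in $s \geq 0$; since these integrands are bounded by $1$, bounded convergence transfers the limit to $\int_0^\lambda \Exp{e^{-s\dot{X}_n}}\ud s$ for every $\lambda \geq 0$. Multiplying by the converging means and substituting into the identity yields $\Expcon{e^{-\lambda X_n}}{X_n>0} \to \Exp{e^{-\lambda Y}}$, hence the claimed conditioned convergence in distribution.

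For the $(\Rightarrow)$ direction, the Laplace transforms $\phi_n(\lambda):=\Expcon{e^{-\lambda X_n}}{X_n>0}$ are analytic and uniformly bounded by $1$ on the open right half-plane, so by Vitali's theorem their pointwise convergence on $[0,\infty)$ upgrades to locally uniform convergence on $(0,\infty)$ together with their derivatives. Writing $-\phi_n'(\lambda)=\Expcon{X_n}{X_n>0}\,\Exp{e^{-\lambda\dot{X}_n}}$ and letting $\lambda \to 0^+$ separates the two factors and produces both $\Expcon{X_n}{X_n>0}\to \Exp{Y}$ and $\dot{X}_n \stackrel{(d)}{\longrightarrow} \dot{Y}$.

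The main obstacle lies in this $(\Rightarrow)$ direction: pointwise convergence of a product does not generally entail convergence of its factors, and the analyticity of Laplace transforms (via Vitali) is what allows one to pass the limit under the derivative in $\lambda$ and thereby isolate $\Exp{e^{-\lambda\dot{X}_n}}$ from the conditional mean. The $(\Leftarrow)$ direction, which suffices for Theorem \ref{teo_yaglom}, is by contrast a direct consequence of the continuity theorem together with bounded convergence applied inside the integral.
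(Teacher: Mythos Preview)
The paper does not give a detailed proof of this lemma; it simply records it as a consequence of identity \eqref{conditionated} and the continuity theorem for Laplace transforms. Your argument for the $(\Leftarrow)$ direction is exactly this, is correct, and is the only direction invoked in the proof of Theorem~\ref{teo_yaglom}.

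Your $(\Rightarrow)$ argument, however, contains a genuine gap. Vitali's theorem does yield $-\phi_n'(\lambda)\to -\phi'(\lambda)$ locally uniformly on $(0,\infty)$, and the factorisation $-\phi_n'(\lambda)=\Expcon{X_n}{X_n>0}\,\Exp{e^{-\lambda\dot X_n}}$ is correct. But the step ``letting $\lambda\to 0^+$ separates the two factors'' is an unjustified interchange of the limits $n\to\infty$ and $\lambda\to 0^+$: the locally uniform convergence supplied by Vitali is on compact subsets of the \emph{open} half-plane and says nothing at $\lambda=0$. In fact the $(\Rightarrow)$ direction is \emph{false} as stated. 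Take $\p(X_n>0)=1$ with $X_n$ equal to an $\mathrm{Exp}(1)$ variable with probability $1-1/n$ and to the constant $n^2$ with probability $1/n$. Then $X_n\to Y\sim\mathrm{Exp}(1)$ in distribution, yet $\Exp{X_n}=1-1/n+n\to\infty\neq 1=\Exp{Y}$, and $\dot X_n\to\infty$ in probability rather than to $\dot Y$. Convergence in distribution does not control first moments without uniform integrability, and analyticity of the Laplace transforms cannot repair this.

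Since only the $(\Leftarrow)$ direction is used downstream, this defect in the lemma's statement (and in your attempted proof of $(\Rightarrow)$) has no bearing on Theorem~\ref{teo_yaglom}.
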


By Lemma \ref{remark 1}, in order to prove Theorem  \ref{teo_yaglom} we need to study the size-biased process \ $\dot{Z}^Q:=\{\dot{Z}_n^Q:n\geq 0\}$. \ Recall that there is a relationship between Galton-Watson processes in environment \ $Q$ \ and Galton-Watson trees in environment \ $Q$. \
In the tree, any particle or individual in generation \ $i$ \ gives birth to particles in generation \ $i+1$ \ according to \ $q_{i+1}$. \ The variable \ $Z_n^Q$ \ is the number of particles at generation \ $n$ \ in the tree.  \ In a similar way,  \ $\dot{Z}^Q_n$\ is the population size at generation \ $n$ \ of some random tree.\ 
According to Kersting and Vatutin \cite[Sections 1.4.1 and 1.4.2]{kersting2017discrete}, the tree associated to \ $\dot{Z}^Q$ \ is a size-biased tree in varying environment \ $Q$. \ More precisely, for each $i\geq 1$, let  \ $\dot{q}_i$ \ be the size-biased transform of \ $q_i$, 
\begin{equation}\label{defqdot}
\dot{q}_{i}(k)= \frac{k}{f'_{i}(1)}q_{i}(k),\qquad k \in \n_0.
\end{equation}

The {\it size-biased tree in environment} \ $Q$\ is constructed as follows:
\begin{enumerate}
	\item[(i)] We first establish an initial marked particle,
	\item[(ii)] the marked particle in generation \ $i \in \n_0$ \
	gives birth to particles in generation \ $i+1$ \ according to \ $\dot{q}_{i+1}$. \  Uniformly, we select one of these particles as the  marked particle. All the others particles  are unmarked, 
	\item[(iii)] any unmarked particle in generation \ $i \in \n_0$ \ gives birth to unmarked particles in generation \ $i+1$ \ according to \ $q_{i+1}$, \ independently of other particles.
\end{enumerate}

The marked genealogical line is called {\it spine}. This construction is known as the {\it one-spine decomposition}; see Figure \ref{fig:1} below. The constant environment case was done by Lyons, Pemantle and Peres \cite{lyons1995conceptual}.   
According to Kersting and Vatutin, \ $\dot{Z}^Q_n$ \ is the number of particles at generation \ $n$ \ in this tree. 

\begin{figure}[!htbp]
	\begin{subfigure}[b]{0.38\textwidth}
		\includegraphics[width=\textwidth]{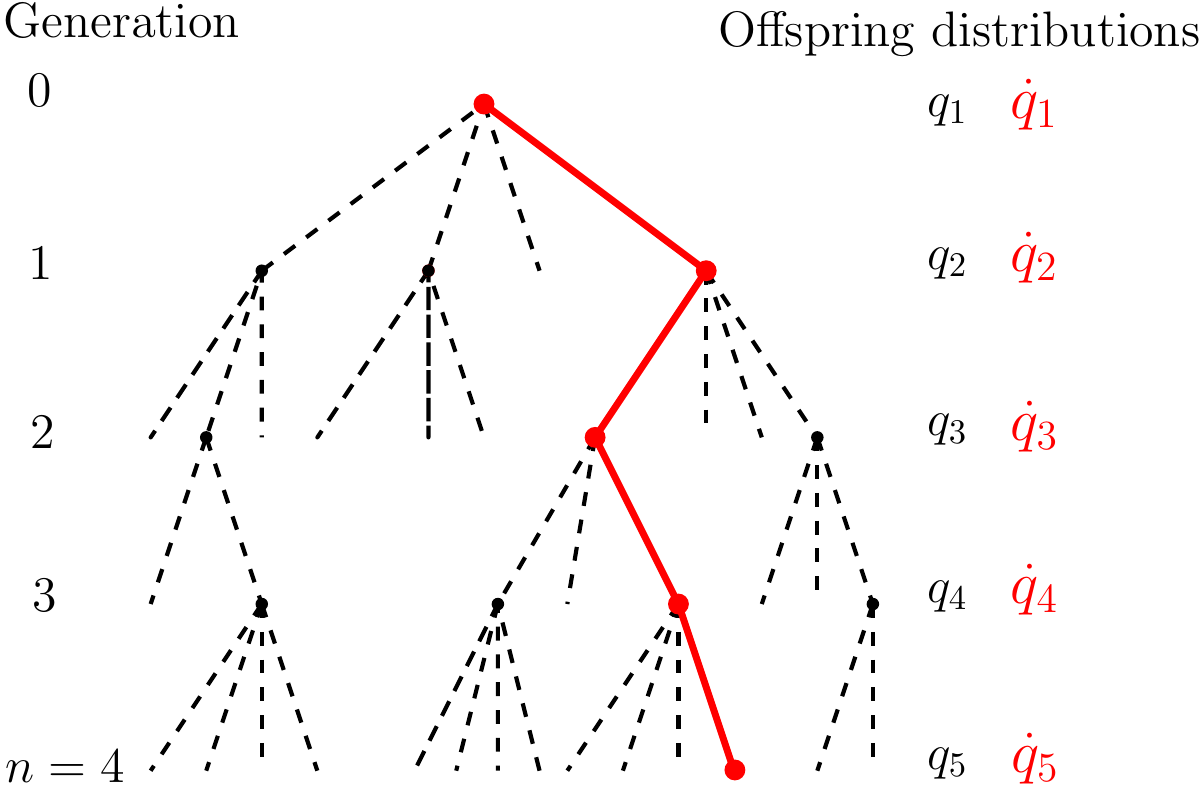}
		\caption{One-spine decomposition}
		\label{fig:1}
	\end{subfigure}
	\hspace{1.5cm}
	\begin{subfigure}[b]{0.38\textwidth}
		\includegraphics[width=\textwidth]{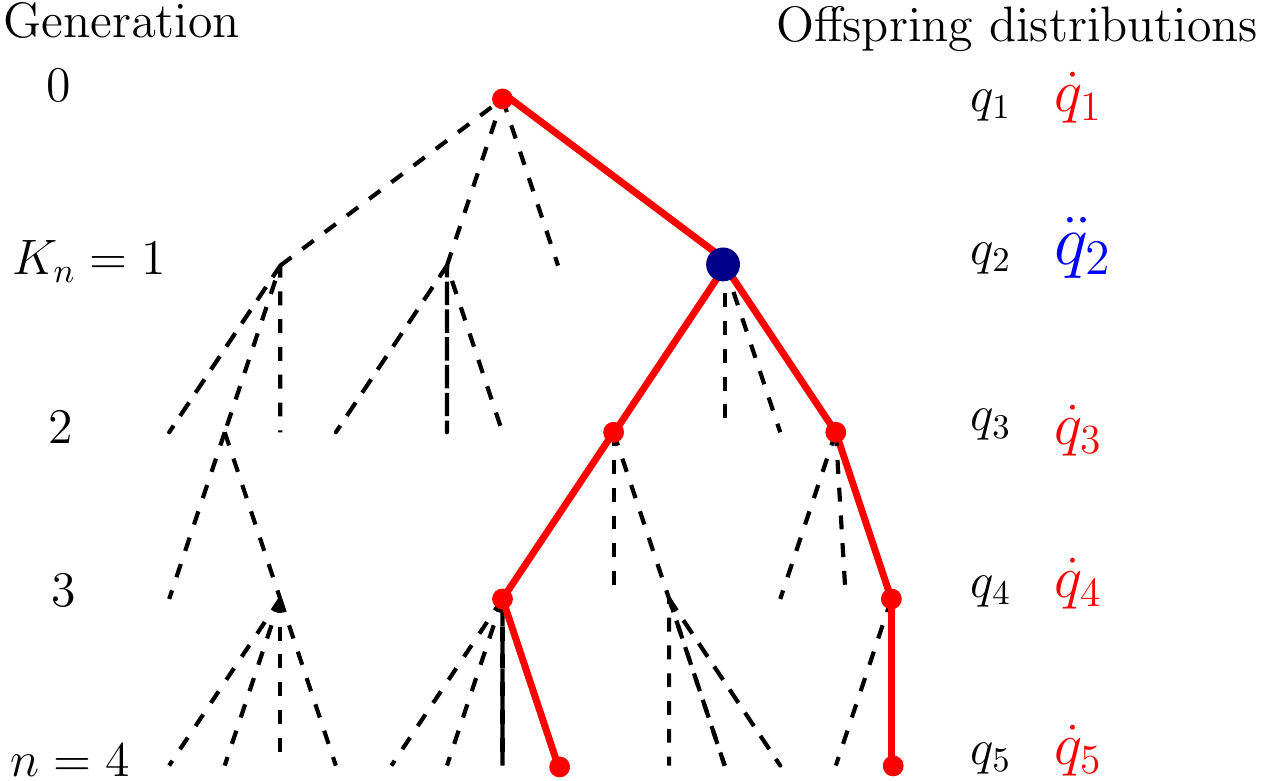}
		\caption{Two-spine decomposition}
		\label{fig:2}
	\end{subfigure}
	\caption{Spine decompositions}
	\label{fig:01}
\end{figure}

Now, we want to construct a random tree up to generation \ $n$ \ with two marked genealogical lines  or spines.  Denote by \ $K_n$ \ the {\it  generation of the most recent common ancestor of the lines}. Note that before \ $K_n$ \ there is only one spine and in generation \ $K_n+1$ \  a second spine is created. Since the offspring  distribution is varying among generations,  \ $K_n$ \ should depend on the environment. We assume that in this construction, \ $K_n$ \ has the following  distribution 
\begin{equation}\label{eq_ley_Kn}
\p(K_n= r) := \frac{\nu_{r+1}}{\mu_{r}}
\left(\sum_{k=0}^{n-1}\frac{\nu_{k+1}}{\mu_{k}}\right)^{-1},
\qquad 0 \leq r \leq n-1,
\end{equation}
where \ $\mu_n$ \ and \ $\nu_n$ \ are defined in \eqref{defi mu}. Thus, by \eqref{defi mu}, generations with larger offspring mean or larger offspring variance are more probably to be chosen as \ $K_n$. In generation $K_n$, we need to have an offspring distribution with two or more individuals.  We denote by \ $\ddot{q}_i$ \ the \ $q_i(q_i-1)$-transform of \ $q_i$ \ given by
\begin{equation}\label{defqddot}
\ddot{q}_{i}(k) = \frac{k(k-1)q_{i}(k)}{\nu_{i} f'_i(1)^2}, 
\qquad k \in \n_0, \quad  i=1,\dots ,n.
\end{equation}

We define a \textit{\ $X(X-1)$-type size-biased  tree in environment \ $Q$ \ up to time \ $n$\ } as the tree constructed as follows:
\begin{enumerate}
	\item[(i)] we first establish an initial marked particle,
	\item[(ii)] select  \ $K_n$ \ according to \eqref{eq_ley_Kn},
	\item[(iii)] the marked particle in generation \ $K_n$ \ gives birth to particles according to  \ $\ddot{q}_{K_n+1}$. \ Uniformly without replacement, we select two of these particles as the marked particles in generation \ $K_n+1$. \ The other particles are unmarked,
	\item[(iv)] any marked particle in   generation \ $i \in \{0,\ldots,n-1\}\setminus K_n$ \ gives birth to particles in generation \ $i+1$ \ according to \ $\dot{q}_{i+1}$. \ Uniformly, select one of these as the marked particle. All the other particles are not marked,
	\item[(v)] any unmarked particle in  generation \ $i \in \{0,\ldots,n-1\}$ \ gives birth to unmarked particles in generation $i+1$ according to \ $q_{i+1}$,  independently of other particles.
\end{enumerate}

We call this construction as the {\it two-spine decomposition}; see Figure \ref{fig:2}. Ren et. al \cite{ren20172} provided a two spine decomposition for Galton-Watson processes in a constant environment. In this case, the distribution of \ $K_n$ \ is uniform in \ $\{0,\ldots, n-1\}$. \ Using that the environment is constant we can recover their construction.  

With these constructions, we can give an intuitive explanation of why the limit must be an exponential random variable, we will make this intuition rigorous in the following sections. For any \ $0\leq k\leq n$, \ let  \ $\ddot{Z}^Q_k$ \ be the population size at the \ $k$-th generation in the previous tree. From the constructions of the size-biased trees (see Figure \ref{fig:01}), we see that we can decompose the particles associated to \ $\ddot{Z}^Q_n$ \ into descendants attached to the longer spine and descendants attached to the shorter spine. The descendants attached to the longer spine can be seen as the population in the \ $n$-th generation of a size-biased tree with environment \ $Q$, \ while the descendants of the shorter spine  are approximately distributed  as the population in  generation \ $n-(K_n+1)$ \ of a size-biased tree with environment \ $Q_{K_n+1}:=(q_{K_n+2},q_{K_n+3},\ldots)$. \  By construction, the two subpopulations are independent.
Therefore, we have roughly that
\begin{equation}
\ddot{Z}_n^Q \stackrel{(d)}{\approx} \dot{Z}_n^Q+ \dot{Z}^{Q_{K_n+1}}_{n-(K_n+1)},\qquad n\geq 1, \label{aprox}
\end{equation}
where the right-hand side of the equation is an independent sum. 
If we normalise with \ $a_n^Q$, \ we obtain
\begin{equation}\label{eq Z_n/a_n}
\frac{\ddot{Z}_n^Q}{a_n^Q} \stackrel{(d)}{\approx} \frac{\dot{Z}_n^Q}{a_n^Q} + \frac{a^{Q_{K_n+1}}_{n-(K_n+1)}}{a_n^Q}\frac{\dot{Z}^{Q_{K_n+1}}_{n-(K_n+1)}}{a^{Q_{K_n+1}}_{n-(K_n+1)}},\qquad n\geq 1.
\end{equation}

Kersting and Vatutin \cite[Lemma 1.2]{kersting2017discrete} proved that \ $\dot{Z}^Q_n$ \ is the size-biased transform of \ $Z_n^Q$. \ In this paper, we provide a precise meaning of equation \eqref{aprox}, we prove that \ $\ddot{Z}_n^Q$ \ is the \ $Z_n^Q(Z_n^Q-1)$-transform of \ $Z_n^Q$ \ (see Proposition \ref{teo_2}), that   
$$(a_n^Q)^{-1}a^{Q_{K_n+1}}_{n-(K_n+1)}\stackrel{(d)}{\longrightarrow} U,\qquad  \mbox{ as } n\rightarrow\infty,$$
where $U$ is an uniform random variable on $[0,1]$\ (see Proposition \ref{Prop_unifor}), and that \ $\dot{Z}^Q_n/a_n^Q$ \ converges in distribution to a random variable \ $\dot{Y}$ (see Proposition \ref{limit Zdot}).

Since  \ $\ddot{Z}_n^Q$ \ is the  $(\dot{Z}_n^Q-1)$-transform of \ $\dot{Z}_n^Q$, we have that $\ddot{Z}^Q_n/a_n^Q$ \ converges in distribution to  \ $\ddot{Y}$, \ the $\dot{Y}$-transform of $\dot{Y}$. 
Hence, by Lemma \ref{remark 1},  if we take limits in \eqref{eq Z_n/a_n}, we see that  \ $Z^Q_n/a_n^Q$ \ conditioned on being strictly positive converges in distribution to a random variable $Y$ that satisfies
\begin{equation}\label{eq_expone}
\ddot{Y} \stackrel{(d)}{=} \dot{Y} + U  \cdot \dot{Y}'
\end{equation}
where \ $\dot{Y}$ \ and \ $\dot{Y}'$\  are both \ $Y$-transforms of \ $Y$,\  \ $\ddot{Y}$\ is a \ $Y^2$-transform of \ $Y$,\ and  \ $U$ \ is an uniform random variable on \ $[0,1]$ \ independent of \ $\dot{Y}$ \and \ $ \dot{Y}'$. Ren et. al. \cite[Lemma 1.3]{ren20172}, showed that a variable \ $Y$ \ is exponentially distributed with mean 1 if and only if \eqref{eq_expone} holds. Therefore, 
\ $Z^Q_n/a_n^Q$ \ must converge in distribution to a standard exponential random variable.

\section{Size-biased trees}
In this section, we  study the size-biased trees defined in the previous section. We associate them to probability measure in the set of  rooted trees. For this purpose, we introduce the so-called {\it Ulam-Harris labeling}. 
Let \ $\mathcal{U}$ \ be the set of finite sequences of strictly positive integers, including \ $\emptyset$. \ For \ $ u\in \mathcal{U}$, \ we define the length of \ $u$ \ by \ $|u|:=n$, if  \ $u=u_1\cdots u_n$, where $n\geq 1$ \ and by \ $|\emptyset|:=0$ if $u=\emptyset$.  \ If \ $u$ \ and \ $v$ \ are two elements in \ $\mathcal{U}$,\ we denote by \ $uv$ \ the concatenation of \ $u$ \ and \ $v$,\ with the convention  that \ $uv=u$ \ if \ $v=\emptyset$. \ The genealogical line of \ $u$ \ is denoted by \ $[\emptyset,u]=\{\emptyset\}\cup\{u_1\cdots u_j: j=1,\ldots ,n\}$.\ Let  \ $\textbf{s}\ \subset\ \mathcal{U}$, \ its most recent common ancestor is the unique element \ $v\in \cap_{u\in\textbf{s}}[\emptyset, u]$ \ with maximal length and its generation is denoted by \  $K_{\textbf{s}}$. 

A {\it rooted tree} \ $\textbf{t}$ \ is a subset of \ $\mathcal{U}$ \ that satisfies  \ $\emptyset\in\textbf{t}$,\  $[\emptyset, u]\subset\textbf{t}$ for any $u\in\textbf{t}$, \ and if \ $u\in \textbf{t}$ \ and \ $i\in\n$ \ satisfy that \ $ui\in\textbf{t}$ \ then, \ $uj\in\textbf{t}$ \ for all \ $1
\leq j\leq i$. \ Denote by \ $\mathcal{T} = \{\textbf{t}: \textbf{t}\ \mbox{is a tree}\},$ the subspace of rooted trees. \ The vertex \ $\emptyset$ \ is called {\it the root} of the tree. For any \ $u\in \textbf{t}$, \ we define the number of offspring of \ $u$ \ by \ $l_u(\textbf{t})=\max\{i\in\z^+: ui\in\textbf{t}\}$. \ The height of \ $\textbf{t}$ \ is defined by \ $|\textbf{t}|= \sup\{|u|: u \in \textbf{t}\}$. \ For any \ $n\in \n$ \ and \ $\textbf{t}, \tilde{\textbf{t}}$ \ trees, we write \ $\textbf{t}\overset{n}{=}\tilde{\textbf{t}}$ \ if they coincide up to height  \ $n$. \ The population size in the \ $n$-th generation of the tree \ $\textbf{t}$ \ is denoted by \ $X_n(\textbf{t})=\#\{u\in\textbf{t}: |u|=n\}$.

A {\it Galton-Watson tree in the environment} \ $Q=(q_1,q_2,\ldots)$ \ is a \ $\mathcal{T}$-valued random variable \ $\textbf{T}$ \ such that 
$$\textbf{G}_n(\textbf{t}):= \p(\textbf{T}\overset{n}{=}\textbf{t}) = \prod_{u\in \textbf{t}:\, |u|<n}  q_{|u|+1}(l_u(\textbf{t})),$$
for any \ $n\geq 0$\  and any tree \ $\textbf{t}$. \ As we said before, the process \ $Z=\{Z_n^Q:n\geq 0\}$ \ defined as \ $Z_n^Q=X_n(\textbf{T})$ \ is a Galton-Watson process in environment \ $Q$. 

Now, we deal with the one-spine  decomposition. This construction builds a tree along a distinguished path.  More precisely, a {\it spine} or distinguished path \ $\textbf{v}$ \ on a tree \ $\textbf{t}$ \ is a sequence \  $\{v^{(k)}: k=0,1,\ldots,|\textbf{t}|\}\subset \textbf{t}$ \ (or $\{v^{(k)}: k=0,1,\ldots\}\subset \textbf{t}$ if $|\textbf{t}|=\infty$) such that \ $v^{(0)}=\emptyset$\ and  $v^{(k)}=v^{(k-1)}j$ \ for some \ $j\in\n$, \ for any \ $1\leq k\leq |\textbf{t}|$. \  We denote by $\dot{\mathcal{T}}$, the {\it subspace of trees with one spine} 
$$\dot{\mathcal{T}}= \{(\textbf{t},\textbf{v}):\ \textbf{t}\ \mbox{is a tree and } \ \textbf{v}\ \mbox{is a spine on}\ \textbf{t} \}$$ 
and by \ $\mathcal{T}_n=\{\textbf{t}\in \mathcal{T}: |\textbf{t}|=n\}$ \ and \ $\dot{\mathcal{T}}_n=\{(\textbf{t},\textbf{v})\in \dot{\mathcal{T}}: |\textbf{t}|=n\}$ \ the restriction of \ $\mathcal{T}$ \ and \ $\dot{\mathcal{T}}$ \ to trees with height \ $n$.\ 

We are going to construct the probability distribution of the size-biased tree in the environment \ $Q$ on the state space $\mathcal{T}$. \ First, we need to define a probability distribution on \ $\dot{\mathcal{T}}$.  Recall the construction of the size-biased tree in the previous section; individuals  along the spine, $\{u\in \textbf{t}: u\in \textbf{v}\}$, have offspring distribution \ $\dot{q}_{|u|+1}$ \ given by \eqref{defqdot}, and from their offspring we select one uniformly as the spine individual in the next generation.  Individuals outside the spine, $\{u\in \textbf{t}: u\notin \textbf{v}\}$, have offspring distribution \ $q_{|u| +1}$. \ Then, the size-biased tree can be seen as a \ $\dot{\mathcal{T}}$-valued random variable \ $ (\dot{\textbf{T}},\textbf{V})$ \ with distribution
$$ \p((\dot{\textbf{T}},\textbf{V})\overset{n}{=}(\textbf{t},\textbf{v})) 
:= \prod_{u\in \textbf{v}:\, |u|<n}  \dot{q}_{|u|+1}(l_u(\textbf{t}))\frac{1}{l_u(\textbf{t})}\prod_{u\in \textbf{t}\setminus\textbf{v}:\, |u|<n}  q_{|u|+1}(l_u(\textbf{t})),$$
for any \ $n\geq 0$ \ and any \ $(\textbf{t},\textbf{v})\in\dot{\mathcal{T}}_n$. \ One readily checks that this measure is a probability  on \ $\dot{\mathcal{T}}$ \ by using  the definition of \ $\dot{q}$ \ and the fact that \ $\textbf{G}_n$ \ is a probability measure. In a similar way, we can write  
$$\p((\dot{\textbf{T}},\textbf{V})\overset{n}{=}(\textbf{t},\textbf{v})) =\frac{1}{\mu_n}\cdot \textbf{G}_n(\textbf{t}), \quad (\textbf{t},\textbf{v})\in \dot{\mathcal{T}}.$$
Hence, by summing over all the possible spines, we obtain the distribution of the {\it size-biased Galton-Watson  tree  in environment} \ $Q$ \ on \ $\mathcal{T}$ 
\begin{equation*}\label{eq_med_Gdot}
\dot{\textbf{G}}_n(\textbf{t}):=\p(\dot{\textbf{T}}\overset{n}{=}\textbf{t})=  \sum_{\textbf{v}: (\textbf{t},\textbf{v})\in\dot{\mathcal{T}}_n} \p((\dot{\textbf{T}},\textbf{V})\overset{n}{=}(\textbf{t},\textbf{v})) =  \frac{1}{\mu_n} X_n(\textbf{t})\cdot \textbf{G}_n(\textbf{t}),
\end{equation*}
for any \ $n\geq 0$ \ and any \ $\textbf{t}\in\mathcal{T}_n$  (see also \cite[Lemma 1.2]{kersting2017discrete}). \  Define the process \ $\dot{Z}^Q=\{\dot{Z}_n^Q: n\geq 0 \}$ \ as 
\ $\dot{Z}_n^Q=X_n(\dot{\textbf{T}})$, \ for each \ $n\geq 1$. \ 
Then, by using the measure \ $\dot{\textbf{G}}_n$ \ we can see that the process \ $\{\dot{Z}_m^Q: 0\leq 
m\leq n\}$ \ is a \ $Z_n^Q$-transform of  \ $\{Z_m^Q: 0\leq m\leq n\}$, \ in other words
$$\Exp{g(\dot{Z}_1^Q,\ldots,\dot{Z}_n^Q)}
=\frac{\Exp{Z_n^Qg(Z_1^Q,\ldots,Z_n^Q)}}{\Exp{Z_n^Q}}, \qquad \mbox{ for all bounded functions } g.$$

Now we consider  the probability distribution associated to the \ $X(X-1)$-type size-biased tree up to time \ $n$ \ on the state space \ $\mathcal{T}_n$. \ As we did before, we define a measure on 
$$\ddot{\mathcal{T}}_n:
= \left\{(\textbf{t},\textbf{v}, \tilde{\textbf{v}}): \ (\textbf{t}, \textbf{v}), (\textbf{t}, \tilde{\textbf{v}}) \in \dot{\mathcal{T}}_n,\ \textbf{v}{\not=}\tilde{\textbf{v}}\right\}, \qquad n\in \n,$$
the {\it subspace of trees with height \ $n$ \ and two different spines}. 
Given a \ $(\textbf{t},\textbf{v}, \tilde{\textbf{v}})\in \ddot{\mathcal{T}}_n$, \ we denote by \ $K_{\textbf{v}, \tilde{\textbf{v}}}=\max\{r<n:\textbf{v}\overset{r}{=} \tilde{\textbf{v}}\}$ \ the generation of the most recent common ancestor of \ $\textbf{v}\cup\tilde{\textbf{v}}$. 

Recall the construction of a \ $X(X-1)$-type size-biased  tree in the previous section; (i) consider an initial spine individual, (ii) select the generation of the most recent common ancestor, $K_{\textbf{v}, \tilde{\textbf{v}}}$, according to \eqref{eq_ley_Kn}, (iii) the spine individual $u$ in that generation has offspring distribution \ $\ddot{q}_{|u|+1}$ \ given by \eqref{defqddot}. From its offspring we select uniformly without replacement two as spine individuals in the next generation, (iv) the spine individuals  in the other generations, $\{u\in \textbf{v}\cup \tilde{\textbf{v}}: |u|\neq K_{\textbf{v}, \tilde{\textbf{v}}}\}$, have offspring distribution \ $\dot{q}_{|u|+1}$ \ given by \eqref{defqdot}. From its offspring we select uniformly one as the spine individual in the next generation,  (v) finally, individuals  outside the spine, $\{u\in \textbf{t}: u \notin\textbf{v}\cup \tilde{\textbf{v}}\}$, have offspring distribution \ $q_{|u|+1}$. \ Then, the \ $X(X-1)$-type size-biased  tree up to time \ $n$ \ can be seen as a  \ $\ddot{\mathcal{T}}_n$-valued random variable \ $(\ddot{\textbf{T}},\textbf{V},\widetilde{\textbf{V}})$ \ with distribution 
\begin{equation*}
\begin{split}
\p((\ddot{\textbf{T}},\textbf{V},\widetilde{\textbf{V}})\overset{n}{=}(\textbf{t},\textbf{v},\tilde{\textbf{v}})) 
:{=}& \frac{\nu_{K_{\textbf{v}, \tilde{\textbf{v}}}+1}}{\mu_{K_{\textbf{v}, \tilde{\textbf{v}}}}} \left(\sum_{k=0}^{n-1}\frac{\nu_{k+1}}{\mu_k}\right)^{-1}\hspace{-.4cm}
\prod_{u\in \textbf{v}\cup \tilde{\textbf{v}}:\, K_{\textbf{v}, \tilde{\textbf{v}}}= |u|}  \ddot{q}_{|u|+1}(l_u(\textbf{t}))\frac{2}{l_u(\textbf{t})(l_u(\textbf{t} )-1)} \\
&\prod_{u\in \textbf{v}\cup \tilde{\textbf{v}}:\, K_{\textbf{v}, \tilde{\textbf{v}}}\neq |u|<n}  \dot{q}_{|u|+1}(l_u(\textbf{t}))\frac{1}{l_u(\textbf{t})}
\prod_{u\in \textbf{t}\setminus(\textbf{v}\cup \tilde{\textbf{v}}):\, |u|<n}  q_{|u|+1}(l_u(\textbf{t})),
\end{split}
\end{equation*}
for any \ $(\textbf{t},\textbf{v}, \tilde{\textbf{v}})\in \ddot{\mathcal{T}}_n$. \ Here, the first two terms in the right-hand side of the equation are associated with step (ii).  \ The first product is associated with step (iii). Then, in the second line, the first product is obtained with (iv). Finally, we use (v) to obtain the last product. By using the definition of \  $q$, \ $\dot{q}$ \ and \ $\ddot{q}$, \ one can readily verify that the previous expression defines a probability measure on \ $\ddot{\mathcal{T}}_n$. \ Moreover,  we have 
$$\p((\ddot{\textbf{T}},\textbf{V},\widetilde{\textbf{V}})
\overset{n}{=} (\textbf{t},\textbf{v}, \tilde{\textbf{v}} ) )
=\frac{2}{\mu_n^2} \left(\sum_{k=0}^{n-1} \frac{\nu_{k+1}}{\mu_k} \right)^{-1} \textbf{G}_n(\textbf{t}),$$
for any \ $(\textbf{t},\textbf{v}, \tilde{\textbf{v}})\in \ddot{\mathcal{T}}_n$. \
Then, by summing over all the possible two spines, we obtain that the \ $X(X-1)$-type size-biased  tree up to time $n$ is a  \ $\mathcal{T}_n$-valued random variable \ $\ddot{\textbf{T}}$ \ with law
\begin{equation}\label{eq_leq_G2p}
\ddot{\textbf{G}}_n(\textbf{t}):= \p(\ddot{\textbf{T}}\overset{n}{=}\textbf{t})
=\frac{1}{\mu_n^2} \left(\sum_{k=0}^{n-1} \frac{\nu_{k+1}}{\mu_k}\right)^{-1} X_n(\textbf{t})(X_n(\textbf{t}) -1)\cdot \textbf{G}_n(\textbf{t}),
\end{equation}
for any \ $\textbf{t}\in\mathcal{T}_n$.  Define the process \ $\ddot{Z}^Q=\{\ddot{Z}_m^Q: 0\leq m \leq n\}$ \  by \ $\ddot{Z}_m^Q=X_m(\ddot{\textbf{T}}).$

Opposite to what happens with \ $(\dot{\textbf{G}}_n:n\geq 1)$, \ by construction,  the measures \ $(\ddot{\textbf{G}}_n:n\geq 1)$ \ are not consistent in the sense that \ $\ddot{\textbf{G}}_n$ \ is not a restriction of \  $\ddot{\textbf{G}}_{n+1}$ \ to the tree with size \ $n$. \    More precisely, in the size-biased tree, the change of measure is intuitively a martingale since the tree under this measure has one spine throughout all generations. While, in  the \ $X(X-1)$-type size-biased  tree, if we restrict a tree with two spines at time \ $n$ \ to the previous generations it is possible to lose one spine. Indeed, the tree will have only one marked particle in all the generations before \ $ K_{\textbf{v}, \tilde{\textbf{v}}}$.\  Then, the change of measure in the next proposition is not a martingale change of measure,  not even in the case of constant environment  \cite[Theorem 1.2]{ren20172}. However, it allows us to conclude that  \ $\{\ddot{Z}^Q_m:0\leq m\leq n\}$ \ is a \ $Z_n^Q(Z_n^Q-1)$-transform of \ $\{Z_m^Q: 0\leq m\leq n\}$. 
\begin{proposition}\label{teo_2}
	Let \ $\{Z_n^Q: n\geq 0\}$ \  be a GWVE and for any \ $n\in \n_0$, \ let  \ $\ddot{Z}^Q=(\ddot{Z}_m^Q: 0\leq m \leq n)$ \ be the process associated with the $X(X-1)$-type size-biased  tree up to time \ $n$. \ Then, for any bounded function \ $g:\z^{n}_+\rightarrow \r$, \
	\begin{eqnarray}
	\e[g(\ddot{Z}_1^Q,\dots, \ddot{Z}_n^Q)] = \frac{\e[Z_n^Q(Z_n^Q -1) g(Z_1^Q,\dots,Z_n^Q)]}{\e[Z_n^Q(Z_n^Q-1) ]}.
	\end{eqnarray}
\end{proposition}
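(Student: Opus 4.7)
My plan is a direct computation: the formula \eqref{eq_leq_G2p} already identifies $\ddot{\textbf{G}}_n$ as the $X_n(X_n-1)$-weighting of $\textbf{G}_n$, and combined with \eqref{eq_media_Zn} it has exactly the form of a biased change of measure. The proposition is essentially the restatement of this density identity at the level of expectations of functions of $(Z_m^Q)_{0\le m\le n}$.

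First I would note that, by construction, $\ddot{Z}_m^Q = X_m(\ddot{\textbf{T}})$ for $0\le m\le n$, so
\begin{equation*}
\e\!\left[g(\ddot{Z}_1^Q,\ldots,\ddot{Z}_n^Q)\right]
= \sum_{\textbf{t}\in\mathcal{T}_n} g\!\left(X_1(\textbf{t}),\ldots,X_n(\textbf{t})\right)\ddot{\textbf{G}}_n(\textbf{t}).
\end{equation*}
Next I would substitute \eqref{eq_leq_G2p} to pull out the constant prefactor $\mu_n^{-2}\!\left(\sum_{k=0}^{n-1}\nu_{k+1}/\mu_k\right)^{-1}$; the remaining sum
\begin{equation*}
\sum_{\textbf{t}\in\mathcal{T}_n} X_n(\textbf{t})\!\left(X_n(\textbf{t})-1\right) g\!\left(X_1(\textbf{t}),\ldots,X_n(\textbf{t})\right)\textbf{G}_n(\textbf{t})
\end{equation*}
equals $\e[Z_n^Q(Z_n^Q-1)\,g(Z_1^Q,\ldots,Z_n^Q)]$, because $\textbf{G}_n$ is the law of the first $n$ generations of the Galton--Watson tree in environment $Q$ and $Z_m^Q = X_m(\textbf{T})$. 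Finally, the second identity in \eqref{eq_media_Zn} reads $\e[Z_n^Q(Z_n^Q-1)]=\mu_n^2\sum_{k=0}^{n-1}\nu_{k+1}/\mu_k$, so the extracted prefactor is exactly $1/\e[Z_n^Q(Z_n^Q-1)]$, and the desired identity follows.

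The proof will not have any real obstacle: all the probabilistic content is already absorbed into the construction of the $X(X-1)$-type size-biased tree, and specifically into the choice \eqref{eq_ley_Kn} of the law of $K_n$, which is engineered precisely so that summing the joint density of $(\ddot{\textbf{T}},\textbf{V},\widetilde{\textbf{V}})$ over all admissible pairs of spines (collapsing the $\dot q$- and $\ddot q$-factors via \eqref{defqdot} and \eqref{defqddot}) yields the combinatorial weight $X_n(\textbf{t})(X_n(\textbf{t})-1)$ in \eqref{eq_leq_G2p}. Given that identity, the proposition is an immediate algebraic consequence, which is why I do not expect any genuine difficulty; the only point worth double-checking is the harmless bookkeeping that $\textbf{G}_n$ may be viewed simultaneously as a probability on $\mathcal{T}_n$ and as the joint law of the generation sizes $(Z_1^Q,\ldots,Z_n^Q)$ up to time $n$.
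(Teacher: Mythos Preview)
Your proposal is correct and follows essentially the same route as the paper: both plug \eqref{eq_leq_G2p} into the expectation under $\ddot{\textbf{G}}_n$, rewrite the resulting $\textbf{G}_n$-integral as an expectation of $Z_n^Q(Z_n^Q-1)g(Z_1^Q,\ldots,Z_n^Q)$, and identify the prefactor as $1/\e[Z_n^Q(Z_n^Q-1)]$. The only cosmetic difference is that the paper obtains the normalizing constant by specializing to $g\equiv 1$, whereas you invoke \eqref{eq_media_Zn} directly.
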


\begin{proof}
	Fix \ $n\geq 0$ \ and recall that for each \ $m\leq n$, \ $Z_m^Q=X_m(\textbf{T})$  under the measure  $\textbf{G}_n$\ and \ 
	$\ddot{Z}_m^Q=X_m(\textbf{T})$ under the measure $\ddot{\textbf{G}}_n$. \
	Hence, by \eqref{eq_leq_G2p}
	\begin{equation*}
	\begin{split}
	\e[g(\ddot{Z}_1^Q, \ldots ,\ddot{Z}_n^Q)] 
	&= \ddot{\textbf{G}}_n[g(X_1(\textbf{T}), \ldots , X_n(\textbf{T}) )] \\ 
	&= \frac{1}{\mu_n^2} \left(\sum_{k=0}^{n-1} \frac{\nu_{k+1}}{\mu_k}\right)^{-1} \textbf{G}_n\left[ X_n(\textbf{T})(X_n(\textbf{T}) -1)g(X_1(\textbf{T}), \ldots , X_n(\textbf{T}) ) \right] \\ 
	&= \frac{1}{\mu_n^2} \left(\sum_{k=0}^{n-1} \frac{\nu_{k+1}}{\mu_k}\right)^{-1} \e\left[ Z_n^Q(Z_n^Q -1)g(Z^Q_1, \ldots, Z^Q_n) \right].
	\end{split}
	\end{equation*}
	By taking \ $g\equiv1$, \ we deduce that 
	$$  \e[ Z^Q_n(Z^Q_n -1)] = \mu_n^2 \sum_{k=0}^{n-1} \frac{\nu_{k+1}}{\mu_k},$$ 
	which implies the result.
\end{proof}

In the reminder of this section, we study some properties of the previous decompositions. We first introduce the notation to refer to shifted environments. Let \ $q$ \ be a probability measure on \ $\n_0$ \ such that \ $q(\{0,1,\ldots, r-1\})=0$ for some $r\in \n$.\ We define the probability measure  \ $[q-r]$ \ in \ $\n_0$ \ by \ $[q-r](i)=q(i+r)$ \ for all \ $i\in\n_0$. \  Given a probability measure \ $q$ \ and an environment \ $Q=(q_1,q_2,...)$, \ we denote
$$q\oplus Q:=(q,q_1,q_2,\ldots).$$
For any \ $m\in\n_0$, \ as in Section \ref{sec_outline} we set
$$Q_m:= (q_{m+1}, q_{m+2},...).$$

We can compute the Laplace transform of \ $\ddot{Z}_n^Q$ \ in terms of the Laplace transform of \ $\dot{Z}_n^Q$ \  and \ $ \dot{Z}_{n-(m+1)}^{Q_{m+1}}$, \ as indicated below. The  proof follows similar arguments as those used in  \cite[Proposition 2.1]{ren20172}, although the presence of varying environment leads to significant changes.

\begin{proposition}\label{prop_desco_Zdosptos}
	Fix \ $n\geq 1$. \ 
	Let  \  $\{\dot{Z}_m^{\cdot}: m\leq n\}$ \  and \ $\{\ddot{Z}_m^{\cdot}: m\leq n\}$ \ be the population size of the size-biased tree and the \ $X(X-1)$-type size-biased tree up to time \ $n$. \ Then, we have the following decomposition, for each \ $\lambda \geq 0$
	\begin{equation*}
	\e\left[\expo{-\lambda \ddot{Z}_n^Q}\right] 
	= \e\left[\expo{-\lambda \dot{Z}_n^Q } \right]  \sum_{m=0}^{n-1} \p(K_n = m) \e\left[\expo{-\lambda \dot{Z}_{n-(m+1)}^{Q_{m+1}}} \right] g(n,m, \lambda),
	\end{equation*}
	where the function \ $g$ \ is defined as follows
	\begin{equation}\label{eq_g}
	g(n,m, \lambda):= \frac{\e\left[\expo{-\lambda Z_{n-m}^{[\ddot{q}_{m+1}-2]\oplus Q_{m+1}}}\right]}{\e\left[\expo{-\lambda Z_{n-m}^{[\dot{q}_{m+1}-1]\oplus Q_{m+1}}}\right]},  
	\qquad 0\leq m \leq n-1, \quad 0\leq \lambda.
	\end{equation}
\end{proposition}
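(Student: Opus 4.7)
The plan is to condition on the generation $K_n$ of the most recent common ancestor of the two spines and to regroup the resulting factors so that $\e[e^{-\lambda\dot{Z}_n^Q}]$ and $\e[e^{-\lambda\dot{Z}_{n-(m+1)}^{Q_{m+1}}}]$ appear explicitly, with the remainder collapsing into $g(n,m,\lambda)$. Starting from
$$\e[e^{-\lambda \ddot{Z}_n^Q}]=\sum_{m=0}^{n-1}\p(K_n=m)\,\e[e^{-\lambda\ddot{Z}_n^Q}\mid K_n=m],$$
it suffices to evaluate each conditional expectation.

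By the construction of the $X(X-1)$-type tree, conditional on $K_n=m$ the population at generation $n$ decomposes into three mutually independent pieces. Along the backbone $v_0,\ldots,v_m$, the spine vertex at level $k$ (for $k=0,\ldots,m-1$) has offspring count $L_k\sim\dot{q}_{k+1}$, yielding $L_k-1$ unmarked siblings at level $k+1$ that each initiate an independent Galton--Watson tree in environment $Q_{k+1}$. The MRCA $v_m$ has offspring count $L_m\sim\ddot{q}_{m+1}$, from which two children become the new spines and the remaining $L_m-2$ start independent Galton--Watson trees in $Q_{m+1}$. Finally, each of the two post-MRCA spines grows as an independent size-biased tree in $Q_{m+1}$ of depth $n-(m+1)$, whose terminal population is distributed as $\dot{Z}_{n-(m+1)}^{Q_{m+1}}$. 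Setting $\phi_k(\lambda):=\e[e^{-\lambda Z_{n-k-1}^{Q_{k+1}}}]$, $\psi_k(s):=\e[s^{L-1}]$ for $L\sim\dot{q}_{k+1}$, and $\tilde{\psi}_m(s):=\e[s^{L-2}]$ for $L\sim\ddot{q}_{m+1}$, the independence and the branching property yield
$$\e[e^{-\lambda\ddot{Z}_n^Q}\mid K_n=m]=\Bigl(\e[e^{-\lambda\dot{Z}_{n-(m+1)}^{Q_{m+1}}}]\Bigr)^{\!2}\prod_{k=0}^{m-1}\psi_k(\phi_k(\lambda))\,\tilde{\psi}_m(\phi_m(\lambda)).$$

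The same one-spine decomposition applied to $\dot{Z}_n^Q$ and to $\dot{Z}_{n-(m+1)}^{Q_{m+1}}$ produces the telescoping identity
$$\frac{\e[e^{-\lambda\dot{Z}_n^Q}]}{\e[e^{-\lambda\dot{Z}_{n-(m+1)}^{Q_{m+1}}}]}=\prod_{k=0}^{m}\psi_k(\phi_k(\lambda)),$$
so the previous display rewrites as $\e[e^{-\lambda\dot{Z}_n^Q}]\cdot\e[e^{-\lambda\dot{Z}_{n-(m+1)}^{Q_{m+1}}}]$ times the ratio $\tilde{\psi}_m(\phi_m(\lambda))/\psi_m(\phi_m(\lambda))$. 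A final application of the branching property identifies $\tilde{\psi}_m(\phi_m(\lambda))$ with $\e[e^{-\lambda Z_{n-m}^{[\ddot{q}_{m+1}-2]\oplus Q_{m+1}}}]$, since the law of $L-2$ for $L\sim\ddot{q}_{m+1}$ is precisely $[\ddot{q}_{m+1}-2]$, and analogously identifies $\psi_m(\phi_m(\lambda))$ with $\e[e^{-\lambda Z_{n-m}^{[\dot{q}_{m+1}-1]\oplus Q_{m+1}}}]$; the quotient is exactly $g(n,m,\lambda)$ from \eqref{eq_g}, and summing over $m$ completes the proof. The main delicate step is justifying the three-way independence underlying the first display; this can be read off from the product form of $\p((\ddot{\textbf{T}},\textbf{V},\widetilde{\textbf{V}})\overset{n}{=}(\textbf{t},\textbf{v},\tilde{\textbf{v}}))$ in Section~3 by regrouping its factors according to whether each vertex lies on the backbone, at the MRCA, or in one of the two post-MRCA subtrees, with the main technical chore lying in keeping the shifted environments $Q_{k+1}$ consistently indexed through the telescoping.
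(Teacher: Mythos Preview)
Your proof is correct and follows essentially the same approach as the paper. Both arguments decompose $\ddot{\textbf{T}}$ into independent subtrees hanging off the spine vertices (with the special subtree at the MRCA carrying the environment $[\ddot{q}_{m+1}-2]\oplus Q_{m+1}$), establish the analogous product formula for $\dot{Z}_n^Q$, and then regroup factors so that the products over $k\ge m+1$ collapse into $\e[e^{-\lambda\dot{Z}_{n-(m+1)}^{Q_{m+1}}}]$; your telescoping identity $\e[e^{-\lambda\dot{Z}_n^Q}]/\e[e^{-\lambda\dot{Z}_{n-(m+1)}^{Q_{m+1}}}]=\prod_{k=0}^{m}\psi_k(\phi_k(\lambda))$ is exactly the paper's application of its equation~\eqref{eq_dist_1} in the environments $Q$ and $Q_{m+1}$. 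The only cosmetic difference is that you bundle the two post-MRCA subtrees directly as independent copies of $\dot{Z}_{n-(m+1)}^{Q_{m+1}}$ before factoring, whereas the paper first writes out all subtree factors individually and then reassembles them.
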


\begin{proof}
	Let	\ $\dot{\textbf{T}}$ \ be a size-biased Galton-Watson tree in environment \ $Q$ \ up to time \ $n$. \ We can decompose \ $\dot{\textbf{T}}$ \ into subtrees with roots along the spine \ $\textbf{V}$;\  see  Figure \ref{fig:3}.  More precisely, for every \ $0\leq k\leq n$, \ there is a \ $v^{(k)}\in \textbf{V}$ \ with \ $|v^{(k)}|=k$ \ and a random tree \ $\textbf{t}_k\in\mathcal{T}$ \ such that
	$$v^{(k)}\, \textbf{t}_k= \{u\in \dot{\textbf{T}}: | [\emptyset,u]\cap \textbf{V} |=k\} \qquad \mbox{ and }\qquad \dot{\textbf{T}}=\underset{k=0}{\overset{n}{\bigsqcup}}v^{(k)}\, \textbf{t}_k,$$
	where \ $\bigsqcup$ \ denotes the disjoint union. Note that \ $X_n(\dot{\textbf{T}})=\underset{k=0}{\overset{n} {\sum}}X_{n-k}(\textbf{t}_k).$ \  In the size-biased tree, each individual along the spine gives birth according to $\dot{q}_{\cdot}$ and one of its offspring is the spine individual in the next generation. Then, it follows that  the  subtrees \ $\textbf{t}_k,$ \ $0\leq k\leq n$, \ are independent Galton-Watson trees with environment \ $[\dot{q}_{k+1}-1]\oplus Q_{k+1}$. \ Therefore,
	\begin{equation}\label{eq_dist_1}
	\e\left[\expo{-\lambda \dot{Z}_n^Q}\right] 
	=  \prod_{k=0}^{n}\e\left[\expo{-\lambda Z_{n-k}^{[\dot{q}_{k+1}-1]\oplus Q_{k+1}}}\right], \qquad  \lambda \geq 0, \ n\in \n_0.
	\end{equation}

	\begin{figure}[!htbp]
		\begin{subfigure}[b]{0.38\textwidth}
			\includegraphics[width=4.5cm]{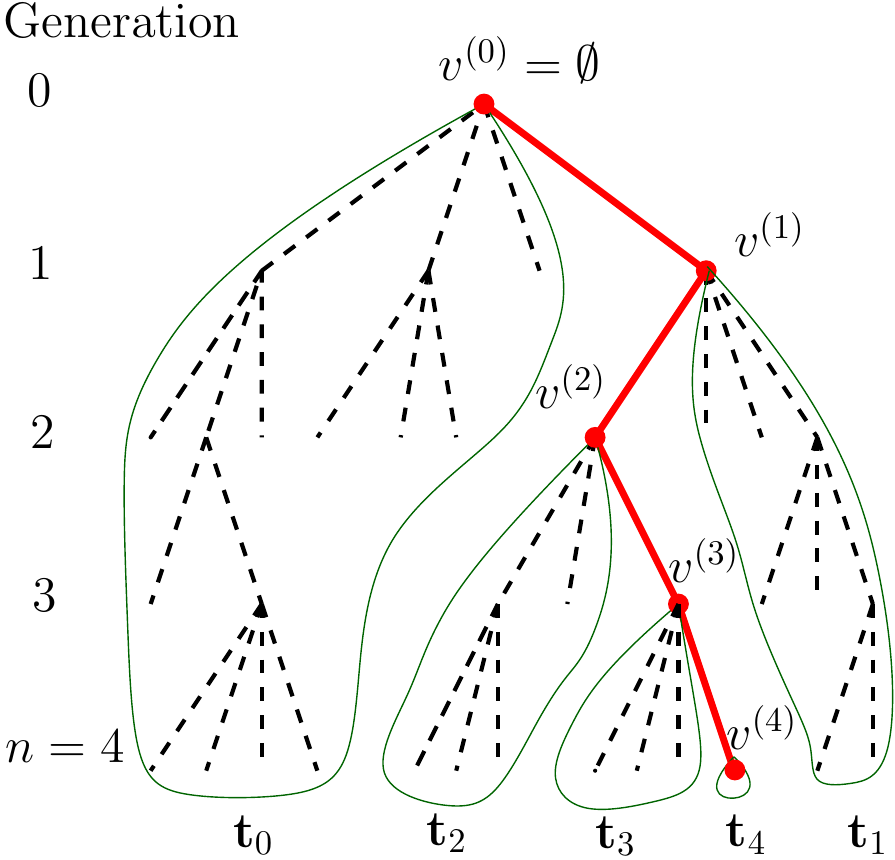}
			\caption{Size-biased tree}
			\label{fig:3}
		\end{subfigure}
		\hspace{1.5cm}
		\begin{subfigure}[b]{0.4\textwidth}
			\includegraphics[width=5cm]{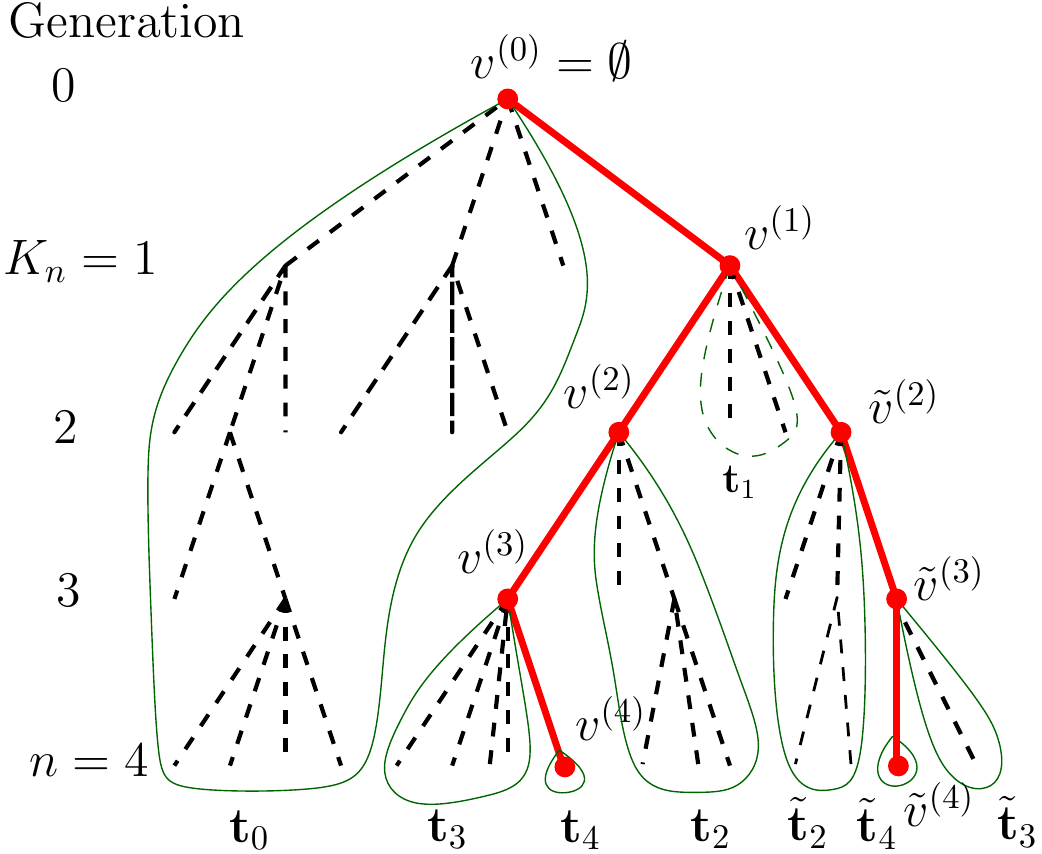}
			\caption{$X(X-1)$-type size-biased tree}
			\label{fig:4}
		\end{subfigure}
		\caption{Subtrees along the spine(s).}
	\end{figure}
	
	Let \ $\ddot{\textbf{T}}$ \ be a \ $X(X-1)$-type size-biased Galton-Watson tree up to time \ $n$. \ In a similar way, we can decompose	\ $\ddot{\textbf{T}}$ \ in subtrees with roots along the spines; see Figure \ref{fig:4}. Denote by \ $\textbf{V}$ \ and \ $\widetilde{\textbf{V}}$, \ the associated spines and recall that  \ $K_{n}=\max\{r<n: \textbf{V}\overset{r}{=}\widetilde{\textbf{V}}\}$. \ We can  form a partition of \ $\ddot{\textbf{T}}$ \ in the sense that
	\begin{equation}\label{partition}
	\ddot{\textbf{T}}=\left(\underset{k=0}{\overset{n}{\bigsqcup}}v^{(k)}\textbf{t}_k\right)\bigsqcup\left(\underset{k=1+K_n}{\overset{n}{\bigsqcup}}\tilde{v}^{(k)}\tilde{\textbf{t}}_k\right)
	\quad \mbox{ and }\quad 
	X_n(\ddot{\textbf{T}})=\underset{k=0}{\overset{n} {\sum}}X_{n-k}(\textbf{t}_k) +\underset{k=1+K_n}{\overset{n} {\sum}}X_{n-k}(\tilde{\textbf{t}}_k),
	\end{equation}
	where, for every \ $0\leq k\leq 
	K_n$, \  $v^{(k)}\in  \textbf{V}\cap\widetilde{\textbf{V}}$ \ and \ $\textbf{t}_k\in \mathcal{T}$ \ are such that \ $|v^{(k)}|=k$ \ and
	$$v^{(k)}\, \textbf{t}_k
	= \{u\in \ddot{\textbf{T}}: | [\emptyset,u]\cap (\textbf{V}\cup \widetilde{\textbf{V}})|=k\};$$
	and, for every
	\ $ K_{n}<k\leq n$,\  $v^{(k)}\in  \textbf{V}$,\  $\tilde{v}^{(k)}\in\widetilde{\textbf{V}}$\  and \ $ \textbf{t}_k, \tilde{\textbf{t}}_k\in \mathcal{T}$ \ satisfy  \ $|v^{(k)}|=k=|\tilde{v}^{(k)}|$, 
	$$v^{(k)}\, \textbf{t}_k= \{u\in \ddot{\textbf{T}}: | [\emptyset,u]\cap \textbf{V}|=k\}\qquad \mbox{ and }\qquad  \tilde{v}^{(k)}\, \tilde{\textbf{t}}_k= \{u\in \ddot{\textbf{T}}: | [\emptyset,u]\cap \widetilde{\textbf{V}}|=k\}.$$
	
	Observe that by the branching property, the subtrees are independent.   The spine individual at generation \ $K_n=m$ \ has offspring distribution $\ddot{q}_{m+1}$, and from its offspring we select two as the spine individuals in the next generation. Then  the subtree \
	$\mathbf{t}_m$ \ is a Galton-Watson tree with environment
	\ $[\ddot{q}_{m+1}-2]\oplus Q_{m+1}.$ \ The  other subtrees \ $\{\mathbf{t}_k: 0\leq k\leq n, k\neq m\}$ \ and \ $\{\tilde{\mathbf{t}}_k: m<k\leq n\}$ \ are Galton-Watson trees with environment \ $ [\dot{q}_{k+1}-1]\oplus Q_{k+1}$. \ Therefore, by using the  decomposition \eqref{partition}, we have 
	\begin{equation*}
	\begin{split}
	\e\left[\expo{-\lambda \ddot{Z}^Q_n}\right] 
	&= \sum_{m=0}^{n-1} \p(K_n= m) \e\left[\expo{-\lambda Z_{n-m}^{[\ddot{q}_{m+1}-2]\oplus Q_{m+1}}}\right]   \\ 
	\times&\prod_{k=0, k\not= m}^{n} \e\left[\expo{-\lambda Z_{n-k}^{[\dot{q}_{k+1}-1]\oplus Q_{k+1}}}\right]\prod_{k= m+1}^{n} \e\left[\expo{-\lambda Z_{n-k}^{[\dot{q}_{k+1}-1]\oplus Q_{k+1}}}\right] .
	\end{split}
	\end{equation*}
	Finally, if we apply equation \eqref{eq_dist_1} for environments \ $Q$ \ and \ $Q_{m+1}$, \ we obtain the result. In other words,
	\begin{equation*}
	\begin{split}
	\e\left[\expo{-\lambda \ddot{Z}^Q_n}\right] 
	=&
	\sum_{m=0}^{n-1} \p(K_n = m) \e\left[\expo{-\lambda Z_{n-m}^{[\ddot{q}_{m+1}-2]\oplus Q_{m+1}}}\right]\\ &\times\frac{\e\left[\expo{-\lambda \dot{Z}^Q_n}\right]}{\e\left[\expo{-\lambda Z_{n-m}^{[\dot{q}_{m+1}-1]\oplus Q_{m+1}}}\right]} \e\left[\expo{-\lambda \dot{Z}_{n-(m+1)}^{Q_{m+1}}} \right].
	\end{split}
	\end{equation*}
\end{proof}

The distribution of the previous processes can be expressed via the generating functions \ $(f_1,f_2,\ldots)$\  associated to \ $Q=(q_1,q_2,\ldots)$. \
For each \  $0\leq m \leq n$ \ and $s\in[0,1]$ we define 
$$ f_{m,n}(s):= [f_{m+1} \circ \ldots \circ f_n](s),$$ 
and \ $f_{n,n}(s):=s$. \  The generating function of \ $Z_n^Q$ \ is equal to \ $f_{0,n}$. \ For the others, we note that  for every \ $s\in[0,1]$ \ and \ $0\leq m < n$,
\begin{equation}\label{formula f''}
f_{m,n}'(s) = \prod_{l=m+1}^{n} f_l'(f_{l,n}(s)),\qquad \quad 
f_{m,n}''(s) = f_{m,n}'(s)^2\sum_{l=m+1}^{n} \frac{f_l''(f_{l,n}(s))}{f_l'(f_{l,n}(s))^2 \prod_{j=m+1}^{l-1} f_j'(f_{j,n}(s))},
\end{equation}
where \ $f_{n,n}'(s)=1$ \ and \ $f_{n,n}''(s)=0$.

\begin{lem}\label{prop_relat} 
	Let $n\geq 1$ \ and  \ $Q$ \ be a varying environment. Let \ $(Z_m^{\cdot} : 0\leq m \leq n)$, $(\dot{Z}_m^{\cdot}: 0\leq m \leq n)$ \ and \ $ (\ddot{Z}_m^{\cdot} : 0\leq m \leq n)$ \ be a GWVE, a sized-biased GWVE and a \ $X(X-1)$-type sized-biased GWVE up to time \ $n$. Then, for any \ $0\leq m< n$ \ and \ $ \lambda\geq 0$, 
	\begin{eqnarray}
	\label{eq_rel_Z_nuevamb1}
	\e\left[\exp\left\{-\lambda Z_{n-m}^{[\dot{q}_{m+1}-1]\oplus Q_{m+1}}\right\}\right] 
	&=&
	\frac{1}{f'_{m+1}(1)}f'_{m+1}(f_{m+1,n}(e^{-\lambda})),\\
	\label{eq_rel_Z_nuevamb2}
	\e\left[\exp\left\{-\lambda Z_{n-m}^{[\ddot{q}_{m+1}-2]\oplus Q_{m+1}}\right\}\right] 
	&=&
	\frac{1}{\nu_{m+1} f'_{m+1}(1)^2}f''_{m+1}(f_{m+1,n}(e^{-\lambda})), 
	\\
	\label{eq_rel_Zpunto}
	\e\left[\exp\left\{-\lambda\dot{Z}_n^Q\right\}\right] 
	&=& \frac{1}{\mu_n}f'_{0,n}(e^{-\lambda}) e^{-\lambda},
	\\
	\label{eq_rel_Zpun_tras}
	\e\left[\exp\left\{-\lambda \dot{Z}_{n-(m+1)}^{ Q_{m+1}}\right\}\right] 
	&=&
	\frac{\mu_{m+1}}{\mu_n}f'_{m+1,n}(e^{-\lambda })e^{-\lambda },
	\\
	\label{eq_rel_Zdospunt}
	\e\left[\exp\left\{-\lambda \ddot{Z}_{n}^{ Q}\right\}\right] 
	&=& \frac{1}{\mu_n^2}\left(\sum_{k=0}^{n-1}\frac{\nu_{k+1}}{\mu_{k}}\right)^{-1}f''_{0,n}(e^{-\lambda})e^{-2\lambda}.
	\end{eqnarray}
\end{lem}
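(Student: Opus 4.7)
The plan is to compute each of the five Laplace transforms directly, by identifying the relevant probability generating function and expressing it in terms of the derivatives of $f_1,\ldots,f_n$ via \eqref{laplace Z} and \eqref{formula f''}. All five identities reduce to routine bookkeeping once the size-biased and doubly size-biased environments are recognised as disguised versions of $f'_{m+1}$ and $f''_{m+1}$.

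First I would handle \eqref{eq_rel_Z_nuevamb1} and \eqref{eq_rel_Z_nuevamb2} together. Using definition \eqref{defqdot} and the reindexing $j=k+1$, the generating function of $[\dot q_{m+1}-1]$ is
$$\sum_{k\ge 0} s^k \dot q_{m+1}(k+1) = \frac{1}{f'_{m+1}(1)}\sum_{j\ge 1} j\, q_{m+1}(j)\, s^{j-1} = \frac{f'_{m+1}(s)}{f'_{m+1}(1)}.$$
An identical reindexing applied to \eqref{defqddot} shows that the generating function of $[\ddot q_{m+1}-2]$ equals $f''_{m+1}(s)/(\nu_{m+1} f'_{m+1}(1)^2)$. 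Since the environment $[\dot q_{m+1}-1]\oplus Q_{m+1}$ consists of the distributions $[\dot q_{m+1}-1], q_{m+2},\ldots, q_n$ when one runs the process for $n-m$ generations, \eqref{laplace Z} tells us that the generating function of $Z_{n-m}^{[\dot q_{m+1}-1]\oplus Q_{m+1}}$ is $f'_{m+1}(f_{m+1,n}(s))/f'_{m+1}(1)$; evaluating at $s=e^{-\lambda}$ yields \eqref{eq_rel_Z_nuevamb1}. The same argument with $\ddot q_{m+1}$ in place of $\dot q_{m+1}$ produces \eqref{eq_rel_Z_nuevamb2}.

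For \eqref{eq_rel_Zpunto} I would invoke the identity from \cite[Lemma 1.2]{kersting2017discrete}, already recovered in the excerpt from $\dot{\textbf G}_n=\mu_n^{-1} X_n \textbf G_n$, that $\dot Z_n^Q$ is the size-biased transform of $Z_n^Q$. Combined with the elementary fact $\e[X s^X]=s F'(s)$ applied to $F=f_{0,n}$, this gives
$$\e[e^{-\lambda \dot Z_n^Q}] = \mu_n^{-1}\,\e[Z_n^Q\, e^{-\lambda Z_n^Q}] = \mu_n^{-1}\, e^{-\lambda}\, f'_{0,n}(e^{-\lambda}).$$
Formula \eqref{eq_rel_Zpun_tras} is the same identity applied to the shifted environment $Q_{m+1}$, for which the generating function of $Z_{n-(m+1)}^{Q_{m+1}}$ is $f_{m+1,n}$ and the mean is $\mu_n/\mu_{m+1}$. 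Finally, for \eqref{eq_rel_Zdospunt} I apply Proposition \ref{teo_2} with $g(z_1,\ldots,z_n)=e^{-\lambda z_n}$: the numerator $\e[Z_n^Q(Z_n^Q-1)e^{-\lambda Z_n^Q}]$ equals $e^{-2\lambda} f''_{0,n}(e^{-\lambda})$ by the analogue $\e[X(X-1)s^X]=s^2 F''(s)$, while the denominator $\e[Z_n^Q(Z_n^Q-1)] = \mu_n^2 \sum_{k=0}^{n-1}\nu_{k+1}/\mu_k$ is precisely the identity derived at the end of the proof of Proposition \ref{teo_2} (and can alternatively be read off from \eqref{formula f''} at $s=1$).

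There is no genuine obstacle: the only point requiring care is index bookkeeping, namely that the compound environments $[\dot q_{m+1}-1]\oplus Q_{m+1}$ and $[\ddot q_{m+1}-2]\oplus Q_{m+1}$ drive a process run for exactly $n-m$ generations, so that the shift in environment composes cleanly with $f_{m+1,n}$ rather than $f_{m,n}$.
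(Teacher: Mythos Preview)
Your proof is correct and for \eqref{eq_rel_Z_nuevamb1}--\eqref{eq_rel_Zpun_tras} it matches the paper's argument essentially word for word (the paper reaches \eqref{eq_rel_Zpunto} via the integral identity \eqref{conditionated} followed by differentiation, which is just a repackaging of your $\e[Xs^X]=sF'(s)$). The one genuine difference is in \eqref{eq_rel_Zdospunt}: you obtain it directly from Proposition~\ref{teo_2} together with $\e[X(X-1)s^X]=s^2F''(s)$, whereas the paper instead substitutes \eqref{eq_rel_Z_nuevamb1}--\eqref{eq_rel_Zpun_tras} into the spine decomposition of Proposition~\ref{prop_desco_Zdosptos} and then collapses the resulting sum using \eqref{formula f''}. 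Your route is shorter and more elementary; the paper's route, while more laborious, has the side benefit of serving as a consistency check between the two-spine decomposition and the $Z(Z-1)$-change-of-measure characterisation of $\ddot Z_n^Q$.
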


\begin{proof}
	Denote by \ $(g_{m+1}, f_{m+2}, f_{m+3}, \dots)$ \ the generating functions of the environment \ $ [\dot{q}_{m+1}-1]\oplus Q_{m+1} = ([\dot{q}_{m+1}-1],q_{m+2}, q_{m+3}, \dots),$ \ where $\dot{q}_{m+1}$ is given in \eqref{defqdot}. Note that, 
	\begin{equation*}
	g_{m+1}(s) 
	=  \frac{1}{f'_{m+1}(1)}\sum_{k=1}^{\infty} ks^{k-1} q_{m+1}(k) =\frac{1}{f_{m+1}'(1)} f'_{m+1}(s).
	\end{equation*}
	Then we can deduce \eqref{eq_rel_Z_nuevamb1}, i.e.
	\begin{equation*}
	\e\left[\expo{-\lambda Z_{n-m}^{[\dot{q}_{m+1}-1]\oplus Q_{m+1}}}\right] 
	= g_{m+1} \circ f_{m+2} \circ \cdots\circ f_n(e^{-\lambda }) 
	=  \frac{1}{f'_{m+1}(1)}f'_{m+1}(f_{m+1,n}(e^{-\lambda })),
	\end{equation*}
	where we use the probability generating function of a GWVE given in \eqref{laplace Z}.
	The proof of \eqref{eq_rel_Z_nuevamb2} follows similar arguments. Recall the definition of $\ddot{q}_{m+1}$ in \eqref{defqddot}. It is enough to see that the generating function of \ $[\ddot{q}_{m+1}-2]$, \    denoted by \ $h_{m+1}$, \ is 
	\begin{equation*}
	h_{m+1}(s) 
	=  \frac{1}{\nu_{m+1}f_{m+1}'(1)^2}\sum_{k=2}^{\infty} k(k-1)s^{k-2}q_{m+1}(k)
	=\frac{1}{\nu_{m+1}f_{m+1}'(1)^2} f''_{m+1}(s).
	\end{equation*}
	
	In order to prove \eqref{eq_rel_Zpunto}, note that \ $\dot{Z}_n^Q$ \ is a size-biased transform of \ $Z_n^Q$. \ Then, by \eqref{conditionated}
	$$\int_{0}^{\lambda} \e\left[\exp\left\{-s\dot{Z}_n^Q\right\}\right]\ud s =\frac{\e\left[1-\exp\left\{-\lambda Z_n^Q\right\} \mid   Z_n^Q>0 \right]}{\e\left[Z_n^Q \mid Z_n^Q > 0\right]}=\frac{\e\left[1-\exp\left\{-\lambda Z_n^Q\right\}  \right]}{\e\left[Z_n^Q \right]},$$
	for all \ $\lambda\geq 0$. \ Differentiating the previous equation with respect to \ $\lambda$ \ and using the generating function of \ $Z_n^Q$, \ we obtain
	\begin{equation*}
	\e\left[\exp\left\{-\lambda\dot{Z}_n^Q\right\}\right] = \frac{1}{\mu_n} \frac{\ud }{\ud \lambda}(1-f_{0,n}(e^{-\lambda})) 
	= \frac{1}{\mu_n} f'_{0,n}(e^{-\lambda}) e^{-\lambda}.
	\end{equation*}
	
	The identity \eqref{eq_rel_Zpun_tras} is obtained similarly to  \eqref{eq_rel_Zpunto} but instead of working with the original environment \ $Q$ \ we use the shifted environment \ $Q_{m+1}$. 
	
	Finally, in order to obtain \eqref{eq_rel_Zdospunt} we use the decomposition presented in Proposition \ref{prop_desco_Zdosptos} 
	$$\small{ \e\left[e^{-\lambda \ddot{Z}^Q_n}\right] 
		=\e\left[e^{-\lambda \dot{Z}^Q_n}\right] \sum_{m=0}^{n-1} \p(K_n = m) \e\left[\expo{-\lambda \dot{Z}_{n-(m+1)}^{Q_{m+1}}} \right]
		\frac{\e\left[\expo{-\lambda Z_{n-m}^{[\ddot{q}_{m+1}-2]\oplus Q_{m+1}}}\right]}{\e\left[\expo{-\lambda Z_{n-m}^{[\dot{q}_{m+1}-1]\oplus Q_{m+1}}}\right]}}.$$
	Remember that \ $K_n$ has distribution \eqref{eq_ley_Kn}. Hence, substituting the previous Laplace transforms (i.e. equations \eqref{eq_rel_Z_nuevamb1},\eqref{eq_rel_Z_nuevamb2} and \eqref{eq_rel_Zpunto}) and simplifying, we get
	\begin{equation*}
	\begin{split}
	\e\left[e^{-\lambda \ddot{Z}^Q_n}\right] 
	=& \frac{f'_{0,n}(e^{-\lambda}) }{\mu_n}e^{-\lambda}  \sum_{m=0}^{n-1}  \frac{\nu_{m+1}}{\mu_{m}}
	\left(\sum_{k=0}^{n-1}\frac{\nu_{k+1}}{\mu_{k}}\right)^{-1}\frac{\mu_{m+1}}{\mu_n}  
	\frac{f'_{m+1,n}(e^{-\lambda})e^{-\lambda}}{\nu_{m+1} f'_{m+1}(1)}\frac{ f''_{m+1}(f_{m+1,n}(e^{-\lambda}))}{f'_{m+1}(f_{m+1,n}(e^{-\lambda}))}\\
	=& e^{-2\lambda} \frac{1}{\mu_n^2}\left(\sum_{k=0}^{n-1}\frac{\nu_{k+1}}{\mu_{k}}\right)^{-1} f'_{0,n}(e^{-\lambda}) \sum_{m=0}^{n-1}  
	f'_{m+1,n}(e^{-\lambda})\frac{f''_{m+1}(f_{m+1,n}(e^{-\lambda}))}{ f'_{m+1}(f_{m+1,n}(e^{-\lambda}))}.
	\end{split}
	\end{equation*}
	Note that for all \ $s\in[0,1]$ \ and \ $0\leq m< n$,
	$$f'_{m+1, n}(s) = \prod_{l=m+2}^{n}f'_l(f_{l,n}(s))=\frac{\prod_{l=1}^{n}f'_l(f_{l,n}(s))}{\prod_{l=1}^{m+1}f'_l(f_{l,n}(s))}= \frac{f'_{0,n}(s)}{f'_{m+1}(f_{m+1,n}(s)) \prod_{l=1}^{m}f'_l(f_{l,n}(s))}.$$
	Then, 
	\begin{equation*}
	\begin{split}
	\e\left[e^{-\lambda \ddot{Z}^Q_n}\right] 
	=&e^{-2\lambda} \frac{1}{\mu_n^2}\left(\sum_{k=0}^{n-1}\frac{\nu_{k+1}}{\mu_{k}}\right)^{-1}f'_{0,n}(e^{-\lambda})^2 \sum_{m=0}^{n-1}   \frac{f''_{m+1}(f_{m+1,n}(e^{-\lambda}))}{ f'_{m+1}(f_{m+1,n}(e^{-\lambda}))^2\prod_{l=1}^{m}f'_l(f_{l,n}(e^{-\lambda}))}\\
	=&e^{-2\lambda} \frac{1}{\mu_n^2}\left(\sum_{k=0}^{n-1}\frac{\nu_{k+1}}{\mu_{k}}\right)^{-1}f''_{0,n}(e^{-\lambda}).
	\end{split}
	\end{equation*}
	This completes the proof.
\end{proof}

The next lemma provides the uniform convergence of the function $g$ defined in \eqref{eq_g}. It is essentially saying that whether we start a critical Galton-Watson process with $[\ddot{q}_{\cdot}-2]$ or $[\dot{q}_{\cdot}-1]$, the distribution at large times does not change a lot. The reader will find its importance  in the next Section. In particular, from Proposition \ref{prop_desco_Zdosptos}, the lemma gives the precise meaning of equation \eqref{aprox}.

\begin{lem}\label{lemma g}
	Suppose that Condition \eqref{eq_cond_mild} is fulfilled. Then, for any $\lambda \geq 0$, 
	\begin{equation*}
	\underset{n\rightarrow \infty}{\lim}\sup_{0\leq m <n} \sup_{s \in[0,\lambda]}  \left(1-g\left(n,m,\frac{s}{a_n^Q}\right)\right) =0.
	\end{equation*}
	
\end{lem}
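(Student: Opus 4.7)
The plan is to reduce $g$ to a closed-form ratio of derivatives of $f_{m+1}$, extract a simple upper bound for $1-g$ of the shape $c\,(1+f'_{m+1}(1))\bigl(1-f_{m+1,n}(e^{-\lambda/a_n^Q})\bigr)$, and then establish its uniform smallness by exploiting the fractional-linear expansion of $f_{m+1,n}$ together with the regularity implied by Condition~\eqref{eq_cond_mild}.

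First I would combine \eqref{eq_rel_Z_nuevamb1} and \eqref{eq_rel_Z_nuevamb2} with the identity $\nu_{m+1}f'_{m+1}(1)^2=f''_{m+1}(1)$ to obtain
\[
g(n,m,\lambda) \;=\; \frac{f''_{m+1}(u)\,f'_{m+1}(1)}{f''_{m+1}(1)\,f'_{m+1}(u)}, \qquad u:=f_{m+1,n}(e^{-\lambda}).
\]
Writing $f''_{m+1}(1)f'_{m+1}(u)-f''_{m+1}(u)f'_{m+1}(1) = [f''_{m+1}(1)-f''_{m+1}(u)]f'_{m+1}(u)-f''_{m+1}(u)[f'_{m+1}(1)-f'_{m+1}(u)]$ and discarding the second non-negative summand (since $f''_{m+1}\geq 0$ on $[0,1]$ and $f'_{m+1}$ is non-decreasing), I obtain $1-g(n,m,\lambda)\leq \bigl(f''_{m+1}(1)-f''_{m+1}(u)\bigr)/f''_{m+1}(1)$. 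Using $f''_{m+1}(1)-f''_{m+1}(u)\leq f'''_{m+1}(1)(1-u)$ (valid as $f'''_{m+1}\geq 0$) combined with Condition~\eqref{eq_cond_mild} then yields
\[
1-g(n,m,s/a_n^Q) \;\leq\; c\,(1+f'_{m+1}(1))\,\bigl(1-f_{m+1,n}(e^{-s/a_n^Q})\bigr),
\]
and since the right-hand side is non-decreasing in $s\in[0,\lambda]$, matters reduce to proving
\[
\sup_{0\leq m<n}\,(1+f'_{m+1}(1))\,\bigl(1-f_{m+1,n}(e^{-\lambda/a_n^Q})\bigr) \;\xrightarrow[n\to\infty]{}\; 0.
\]

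For the latter uniform estimate I would iterate the identity $(1-f_k(t))^{-1}=(f'_k(1)(1-t))^{-1}+\phi_k(t)$, where $\phi_k(t):=(1-f_k(t))^{-1}-(f'_k(1)(1-t))^{-1}\geq 0$, from $k=m+2$ up to $n$, which gives
\[
1-f_{m+1,n}(t) \;=\; \frac{(\mu_n/\mu_{m+1})(1-t)}{1+(1-t)\,\mu_n\sum_{l=m+1}^{n-1}\phi_{l+1}(f_{l+1,n}(t))/\mu_l}.
\]
Regularity (implied by Condition~\eqref{eq_cond_mild}, cf.\ \cite[Proposition~2]{kersting2017unifying}) furnishes the two-sided bound $\phi_k(t)\asymp\nu_k$ on $[0,1]$, so that the denominator is comparable to $1+(1-t)\mu_n\sum_{l=m+1}^{n-1}\nu_{l+1}/\mu_l = 1+2(1-t)a_n^Q(1-R_m)$ with $R_m:=(\mu_n/\mu_{m+1})(a_{m+1}^Q/a_n^Q)\in[0,1]$. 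Substituting $t=e^{-\lambda/a_n^Q}$ (so $1-t\asymp\lambda/a_n^Q$ for large $n$) and splitting according to whether $R_m\leq 1/2$ or $R_m>1/2$ closes the argument: in the first case the denominator contributes the extra factor $a_n^Q(1-R_m)$ which absorbs the potentially large $\mu_n/\mu_{m+1}$, while in the second case the identity $\mu_n/\mu_{m+1}=R_m\,a_n^Q/a_{m+1}^Q$ keeps $\mu_n/\mu_{m+1}$ comparable to $a_n^Q/a_{m+1}^Q$, so the criticality hypothesis $a_n^Q\to\infty$ makes everything vanish uniformly.

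The main obstacle is precisely this last step: the prefactor $1+f'_{m+1}(1)=1+\mu_{m+1}/\mu_m$ may be arbitrarily large for certain indices $m$, so the naive linear bound $1-f_{m+1,n}(t)\leq(\mu_n/\mu_{m+1})(1-t)$ is not enough. The fractional-linear expansion is indispensable, and the uniform two-sided control on $\phi_k$ delivered by regularity is what lets the denominator compensate for the growth of $\mu_n/\mu_{m+1}$; it is precisely the definition of $a_n^Q$ as a weighted tail of $\{\nu_{k+1}/\mu_k\}$ that causes these two competing quantities to balance out as $n\to\infty$.
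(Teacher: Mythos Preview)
Your reduction matches the paper exactly: you obtain the same closed form for $g$, bound $1-g$ by $c\,(1+f'_{m+1}(1))\bigl(1-f_{m+1,n}(e^{-s/a_n^Q})\bigr)$ via Condition~\eqref{eq_cond_mild}, and are left with showing
\[
\sup_{0\leq m<n}\,(1+f'_{m+1}(1))\,\bigl(1-f_{m+1,n}(e^{-\lambda/a_n^Q})\bigr)\;\longrightarrow\;0.
\]

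Your final step, however, has a genuine gap. After substituting $t=e^{-\lambda/a_n^Q}$ (so $1-t\asymp \lambda/a_n^Q$), the denominator in your fractional-linear formula becomes
\[
1+(1-t)\cdot 2a_n^Q(1-R_m)\;\asymp\;1+2\lambda(1-R_m)\;=\;O(1),
\]
not $a_n^Q(1-R_m)$ as you claim. The factor $a_n^Q$ cancels against $1-t$, so the denominator is bounded and cannot absorb a large $\mu_n/\mu_{m+1}$. In particular, for any fixed small $m$ one has $R_m\to 0$ as $n\to\infty$ (so you are forced into your ``first case''), yet the bound you extract there does not tend to zero. Your case split therefore does not close.

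The irony is that the ``naive linear bound'' you explicitly reject is exactly what the paper uses, and it suffices. From $1-f_{m+1,n}(t)\leq (\mu_n/\mu_{m+1})(1-t)$ one gets
\[
(1+f'_{m+1}(1))\bigl(1-f_{m+1,n}(e^{-\lambda/a_n^Q})\bigr)\;\leq\;\Bigl(\tfrac{1}{\mu_{m+1}}+\tfrac{1}{\mu_m}\Bigr)\frac{\mu_n}{a_n^Q}\,\lambda.
\]
The key observation (which replaces your fractional-linear machinery entirely) is the monotonicity $a_k^Q/\mu_k=\tfrac12\sum_{j<k}\nu_{j+1}/\mu_j$ in $k$, giving $\mu_n/(\mu_m a_n^Q)\leq 1/a_m^Q$. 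Hence for $m\geq N$ the right-hand side is at most $(1/a_{m+1}^Q+1/a_m^Q)\lambda\leq 2\epsilon\lambda$, while for the finitely many $m<N$ one simply uses $\mu_n/a_n^Q\to 0$ (criticality) against the fixed maximum of $1/\mu_{m+1}+1/\mu_m$. This two-regime split is the missing idea.
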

\begin{proof}
	By applying Lemma \ref{prop_relat} , we have that for any \ $s\in[0,\lambda]$ \ and \ $0\leq m\leq n-1$, 	
	\begin{equation*}
	g\left(n,m,\frac{s}{a_n^Q}\right) = \frac{f'_{m+1}(1)}{f'_{m+1}(f_{m+1,n}(e^{-s/a_n^Q}))}\frac{f''_{m+1}(f_{m+1,n}(e^{-s/a_n^Q}))}{f_{m+1}''(1)}.
	\end{equation*}
	The proof is thus complete as soon as we can show the following uniform convergences 
	\begin{eqnarray}
	\underset{n\rightarrow \infty}{\lim}	\sup_{0\leq m < n} \sup_{s \in[0,\lambda]}  \left(1-\frac{f'_{m+1}(f_{m+1,n}(e^{-s/a_n^Q}))}{f'_{m+1}(1)}\right) &=&0,\label{eq_unifcon1}\\
	\underset{n\rightarrow \infty}{\lim}	\sup_{0\leq m  < n} \sup_{s \in[0,\lambda]}  \left(1-\frac{f''_{m+1}(f_{m+1,n}(e^{-s/a_n^Q}))}{f_{m+1}''(1)}\right) &=& 0\label{eq_unifcon2}.
	\end{eqnarray}
	We shall start with \eqref{eq_unifcon1}. With the help of the Mean Value Theorem  for \ $f'_{m+1}$ \ and using that \ $f''_{m+1}$ \ is increasing, we obtain 
	\begin{equation*}
	\begin{split}
	0\leq\sup_{0\leq m  < n} \sup_{s \in[0,\lambda]}  \left(1-\frac{f'_{m+1}(f_{m+1,n}(e^{-s/a_n^Q}))}{f'_{m+1}(1)}\right)
	&\leq\sup_{0\leq m  < n} \sup_{s \in[0,\lambda]} \frac{f_{m+1}''(1)}{f_{m+1}'(1)} \left(1-\
	f_{m+1,n}(e^{-s/a_n^Q})\right).
	\end{split}
	\end{equation*}
	Kersting \cite[Equation 23]{kersting2017unifying} showed that under Condition \eqref{eq_cond_mild}, there exists  \ $ c>0$ \ such that
	\begin{equation}\label{ecuacion 23}
	f_k''(1)\leq cf_k'(1)(1+f_k'(1)), \qquad \mbox{ for all }k\geq 1.
	\end{equation}
	Thus
	\begin{equation*}\small
	\begin{split}
	\sup_{0\leq m < n} \sup_{s \in[0,\lambda]}  \left(1-\frac{f'_{m+1}(f_{m+1,n}(e^{-s/a_n^Q}))}{f'_{m+1}(1)}\right)
	&\leq\sup_{0\leq m < n} \sup_{s \in[0,\lambda]} c (1+f_{m+1}'(1))\left(1-\
	f_{m+1,n}(e^{-s/a_n^Q})\right).
	\end{split}
	\end{equation*}
	
	For similar argument to those given above, using Condition \eqref{eq_cond_mild}, and upon an adjustment of the value of the constant, we can get the same upper bound for the left-hand side supremums in \eqref{eq_unifcon2}. Therefore, it is enough to prove 
	\begin{equation}\label{eq_convunif4}
	\underset{n\rightarrow \infty}{\lim}\sup_{0\leq m < n} \sup_{s \in[0,\lambda]} (1+f_{m+1}'(1)) (1-\
	f_{m+1,n}(e^{-s/a_n^Q})) = 0.
	\end{equation}
	Let $\lambda \geq 0$. By the Mean Value Theorem for \  $f_{m+1,n}$ \ and using that \ $f_{m+1,n}'$ \ is an increasing function, we get for any $0\leq s\leq \lambda $ and $0\leq m< n$
	\begin{equation*}
	0\leq (1+ f_{m+1}'(1))	\left(1-f_{m+1,n}(e^{-s/a_n^Q})\right)  \leq (1+f_{m+1}'(1)) f'_{m+1,n}(1)(1-e^{-s/a_n^Q}).
	\end{equation*}
	Observe that by Taylor's approximation, \ $e^{-s/a_n^Q} = 1- \tfrac{s}{a_n^Q} + y_n$ \ where \ $y_n\geq 0$ is the remainder error term.\ Then,  
	for \ $s\in[0,\lambda]$\  and \ $0\leq m< n$
	\begin{equation}\label{eq_convunf3}
	0\leq(1+ f_{m+1}'(1))\left(1-f_{m+1,n}(e^{-s/a_n^Q})\right)   \leq (1+f_{m+1}'(1))\frac{\mu_n}{\mu_{m+1}} \frac{\lambda}{a_n^Q}=\left(\frac{1}{\mu_{m+1}} + \frac{1}{\mu_m}\right) \frac{\mu_n}{a_n^Q} \lambda.
	\end{equation}
	
	Now, we decompose the left-hand side of \eqref{eq_convunif4} into two limits  where the supremum is taken over two separate sets.  Recall that in the critical case, given an \ $\epsilon>0$ \ there exists \ $N>0$ \ such that \ $\left(a_k^Q\right)^{-1}\leq \epsilon $ \ for any \ $k\geq N$.  Then, we take the two sets as \ $\{m< N\}$ \ and \  $\{N\leq m<n\}$. For the first limit, we observe  
	\begin{equation*}
	\sup_{0\leq m <N}\sup_{s \in[0,\lambda]} (1+f_{m+1}'(1))\left(1-f_{m+1,n}(e^{-\lambda/a_n^Q})\right) \leq \frac{\mu_n}{a_n^Q} \lambda \max_{0\leq  m < N} \left(\frac{1}{\mu_{m+1}} + \frac{1}{\mu_m}\right).
	\end{equation*}
	By criticality, \ $\mu_n/a_n^Q\rightarrow 0$ \ as \ $n\rightarrow 0$. \ Then,   
	\begin{equation}\label{eq_convunif1}
	\lim\limits_{n\to \infty}\sup_{0\leq m <N}\sup_{s \in[0,\lambda]} (1+f_{m+1}'(1))\left(1-f_{m+1,n}(e^{-\lambda/a_n^Q})\right) = 0.
	\end{equation}
	
	For the second limit, we note  that for any $0\leq m\leq n$,
	$$\frac{a_m^Q}{\mu_m}=\frac{1}{2}\underset{k=0}{\overset{m-1}{\sum}}\frac{\nu_{k+1}}{\mu_k}\leq \frac{1}{2}\underset{k=0}{\overset{n-1}{\sum}}\frac{\nu_{k+1}}{\mu_k}=\frac{a_n^Q}{\mu_n}.$$
	Then, by \eqref{eq_convunf3}  and using that \ $N\leq m <n$ \ we get 
	\begin{equation*}\label{eq_convunif2}
	\sup_{N\leq m < n} \sup_{s \in[0,\lambda]} (1+f'_{m+1}(1))\left(1-f_{m+1,n}(e^{-\lambda/a_n^Q})\right)
	\leq\lambda  
	\sup_{N\leq m < n} \left(\frac{1}{a_{m+1}^Q} 
	+ \frac{1}{a_{m}^Q}\right)\leq 2\epsilon \lambda.
	\end{equation*}
	Therefore,
	$$\lim\limits_{n\to \infty}\sup_{N\leq m < n}\sup_{s \in[0,\lambda]} (1+f_{m+1}'(1))\left(1-f_{m+1,n}(e^{-\lambda/a_n^Q})\right) = 0,$$
	which together with the  limit \eqref{eq_convunif1} gives us  \eqref{eq_convunif4}.  This concludes the proof.
\end{proof}

In particular, from Proposition \ref{prop_desco_Zdosptos}, we can deduce that \eqref{aprox} holds.

\section{Proof of the main result}
As we explained in the outline of the proof, in this manuscript we provide a probabilistic argument of a  Yaglom-type limit for critical GWVEs. In the previous section we deduced that \ $\ddot{Z}_n^Q$ \ is the \ $Z_n^Q(Z_n^Q-1)$-transform of \ $Z_n^Q$ and that equation \eqref{aprox} holds. Here, we prove the other remaining steps, contained in Proposition \ref{Prop_unifor} and Proposition \ref{limit Zdot}. First, we present these propositions. Then, using all the tools that we created, we provide a proof for our main result. Finally, we prove the two propositions.

Recall the definition of \ $K_n$ \ in \eqref{eq_ley_Kn}. Given the environment \ $Q$, \ we define
$$	A_{n,m}:= \frac{a^{Q_{m+1}}_{n-(m+1)}}{a_n^Q},\qquad \mbox{ for }  0\leq m<n.$$

\begin{proposition}\label{Prop_unifor} 
	Let $Z^Q$ be a critical GWVE satisfying Condition \eqref{eq_cond_mild}. Then
	\begin{equation*}
	A_{n,K_n}\stackrel{(d)}{\longrightarrow} U, \qquad  \text{as }\ n \rightarrow \infty,
	\end{equation*}
	where \ $U$ \ is an uniform random variable on \ $[0,1]$.  
\end{proposition}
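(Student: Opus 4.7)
The plan is to compute $A_{n,m}$ in closed form and reduce the claim to the uniform convergence of a step CDF toward the identity on $[0,1]$. Writing $S_m := \sum_{j=0}^{m}\nu_{j+1}/\mu_j$ for $m \geq 0$ and $S_{-1} := 0$, the identities $\mu_j^{Q_{m+1}} = \mu_{j+m+1}/\mu_{m+1}$ and $\nu_j^{Q_{m+1}} = \nu_{j+m+1}$, combined with the definition of $a_{\cdot}^{\cdot}$, give after a direct substitution
$$
a_{n-(m+1)}^{Q_{m+1}} = \frac{\mu_n}{2}\sum_{j=m+1}^{n-1}\frac{\nu_{j+1}}{\mu_j} = \frac{\mu_n}{2}(S_{n-1} - S_m),
$$
so that $A_{n,m} = 1 - S_m/S_{n-1}$. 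Moreover, the law \eqref{eq_ley_Kn} of $K_n$ rewrites as $\p(K_n = m) = (S_m - S_{m-1})/S_{n-1}$. Setting $T_n := S_{K_n}/S_{n-1}$, one has $A_{n,K_n} = 1 - T_n$, and since $U \stackrel{(d)}{=} 1 - U$ it suffices to prove $T_n \stackrel{(d)}{\longrightarrow} U$.

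The random variable $T_n$ assigns mass $(S_m - S_{m-1})/S_{n-1}$ to the atom $S_m/S_{n-1}$, so its CDF at $u \in [0,1]$ equals $F_n(u) = S_{M_n(u)}/S_{n-1}$, where $M_n(u) := \max\{m \in \{-1, 0, \ldots, n-1\} : S_m \leq u S_{n-1}\}$ (admitting $m=-1$ with $S_{-1}=0$ handles the empty-maximum case). Setting
$$
\delta_n := \max_{0 \leq m \leq n-1} \frac{\nu_{m+1}/\mu_m}{S_{n-1}},
$$
the defining inequalities for $M_n(u)$ yield $0 \leq u - F_n(u) \leq \delta_n$ uniformly in $u \in [0,1]$. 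Hence the CDF of $T_n$ converges uniformly to the identity on $[0,1]$, and the claim follows, as soon as $\delta_n \to 0$.

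The heart of the argument is thus to establish $\delta_n \to 0$. For this I would use Condition \eqref{eq_cond_mild} via Kersting's inequality \eqref{ecuacion 23}, which gives $\nu_{m+1}/\mu_m \leq c(1/\mu_m + 1/\mu_{m+1})$. For $m \geq 1$, the identity $a_m^Q = \mu_m S_{m-1}/2$ together with $S_{m-1} \leq S_{n-1}$ yields $(1/\mu_m)/S_{n-1} \leq 1/(2 a_m^Q)$, and analogously for the index $m+1$. Fix $\epsilon > 0$; since $a_k^Q \to \infty$ by criticality (cf.\ \eqref{limit a}), there exists $M \geq 1$ with $a_k^Q \geq 1/\epsilon$ for every $k \geq M$, so that every index $M \leq m \leq n-1$ contributes at most $c\epsilon$ to the maximum defining $\delta_n$. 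For the finitely many remaining indices $0 \leq m < M$, the numerator $\nu_{m+1}/\mu_m$ is bounded by a constant depending on $M$ alone, whereas $S_{n-1} \to \infty$ (as $\sum \nu_{k+1}/\mu_k = \infty$ in the critical regime), so these contributions vanish as $n \to \infty$. Taking $\limsup_{n\to\infty}$ and letting $\epsilon \downarrow 0$ gives $\delta_n \to 0$. The delicate point, and the main obstacle, is precisely this combination of (A) with criticality: without a control on the individual jumps of the sequence $(S_m)$, a single generation could carry a non-negligible mass under $\p(K_n = \cdot)$, producing an atom in the limit and destroying uniformity.
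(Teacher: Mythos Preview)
Your proof is correct and follows essentially the same route as the paper: you compute $A_{n,m}=1-S_m/S_{n-1}$ explicitly, reduce the convergence to showing that the maximal jump $\delta_n=\max_m (\nu_{m+1}/\mu_m)/S_{n-1}$ (which is exactly the norm $\|P^{(n)}\|$ of the partition the paper introduces) tends to zero, and then handle that via Kersting's inequality \eqref{ecuacion 23} together with the same splitting into small $m$ (finitely many terms, $S_{n-1}\to\infty$) and large $m$ (controlled by $a_m^Q\to\infty$). The only cosmetic difference is that you pass to $T_n=1-A_{n,K_n}$ and use the symmetry $U\stackrel{(d)}{=}1-U$, whereas the paper works directly with the decreasing sequence $A_{n,\cdot}$.
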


Using the previous proposition, we can show the following.

\begin{proposition}\label{limit Zdot}
	Let \ $\dot{Z}^Q=\{\dot{Z}_n^Q: n\geq 0\}$ \ be a size-biased GWVE. Then, 
	$$(a_n^Q)^{-1}\dot{Z}_n^Q\overset{(d)}{\longrightarrow}\dot{Y} \qquad \mbox{ as }\quad n\rightarrow \infty,$$
	where \ $\dot{Y}$ \ is the size-biased transform of a standard exponential random variable. 
\end{proposition}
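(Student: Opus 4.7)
The plan is to compute the Laplace transform of $\dot{Z}_n^Q/a_n^Q$ from the explicit formula \eqref{eq_rel_Zpunto} and identify the limit as $(1+\lambda)^{-2}$, the Laplace transform of a $\mathrm{Gamma}(2,1)$ random variable (that is, of the size-biased transform of a standard exponential). The decisive analytic input will be a uniform telescoping expansion for $1-f_{l,n}$, and the decisive probabilistic input will be Proposition~\ref{Prop_unifor}.

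First I would combine \eqref{eq_rel_Zpunto} with the product formula \eqref{formula f''} to write
\begin{equation*}
\Exp{e^{-\lambda \dot{Z}_n^Q/a_n^Q}} = e^{-\lambda/a_n^Q} \prod_{l=1}^n \frac{f'_l(f_{l,n}(e^{-\lambda/a_n^Q}))}{f'_l(1)}.
\end{equation*}
Taking logarithms, using $\log(1-x)=-x+O(x^2)$ and applying the Mean Value Theorem to $f'_l$ with the bound \eqref{ecuacion 23}, I would reduce the proof to showing
\begin{equation*}
\sum_{l=1}^n \nu_l f'_l(1)\bigl(1-f_{l,n}(e^{-\lambda/a_n^Q})\bigr) \longrightarrow 2\log(1+\lambda),
\end{equation*}
with the Taylor/MVT remainders controlled uniformly in $l$ by Condition~\eqref{eq_cond_mild}.

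Next I would invoke the standard telescoping identity for the reciprocal generating function,
\begin{equation*}
\frac{1}{1-f_{l,n}(s)} = \frac{\mu_l}{\mu_n(1-s)} + \mu_l \sum_{k=l}^{n-1}\frac{r_{k+1}(f_{k+1,n}(s))}{\mu_k}, \qquad r_k(s):=\frac{1}{1-f_k(s)}-\frac{1}{f'_k(1)(1-s)},
\end{equation*}
combined with the fact that, under Condition \eqref{eq_cond_mild}, $r_{k+1}(s)\to \nu_{k+1}/2$ as $s\to 1$ with uniformly bounded remainder. Specializing to $s=e^{-\lambda/a_n^Q}$ (so $1-s\sim \lambda/a_n^Q$) would give, uniformly in $l$,
\begin{equation*}
1-f_{l,n}(e^{-\lambda/a_n^Q})=\frac{\mu_n}{\mu_l a_n^Q}\cdot\frac{\lambda}{1+\lambda A_{n,l-1}}\bigl(1+o(1)\bigr),
\end{equation*}
where $A_{n,l-1}=a^{Q_l}_{n-l}/a_n^Q$. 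Plugging this back, using $\nu_l f'_l(1)=\nu_l\mu_l/\mu_{l-1}$ together with $\sum_{k=0}^{n-1}\nu_{k+1}/\mu_k=2a_n^Q/\mu_n$ and the definition \eqref{eq_ley_Kn} of $\p(K_n=\cdot)$, the sum collapses into $2\lambda\,\Exp{(1+\lambda A_{n,K_n})^{-1}}+o(1)$.

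Finally, Proposition~\ref{Prop_unifor} gives $A_{n,K_n}\overset{(d)}{\longrightarrow} U$ with $U$ uniform on $[0,1]$; since $x\mapsto (1+\lambda x)^{-1}$ is bounded continuous on $[0,1]$, bounded convergence yields $\Exp{(1+\lambda A_{n,K_n})^{-1}}\to \int_0^1 du/(1+\lambda u)=\log(1+\lambda)/\lambda$, whence $-\log\Exp{e^{-\lambda\dot{Z}_n^Q/a_n^Q}}\to 2\log(1+\lambda)$. This identifies the limiting Laplace transform as $(1+\lambda)^{-2}$ and therefore $\dot{Y}\sim\mathrm{Gamma}(2,1)$. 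The hard part will be making the telescoping asymptotic for $1-f_{l,n}$ \emph{uniform in $l$}: the sum has $n$ terms, so per-$l$ errors must be summable; this requires a careful uniform control of $r_{k+1}(s)-\nu_{k+1}/2$ under \eqref{eq_cond_mild} and the analogous uniform treatment of the Taylor/MVT remainders in the opening step. Once these are in place, the rest is direct substitution and a single application of Proposition~\ref{Prop_unifor}.
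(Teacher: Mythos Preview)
Your outline is correct and can be made rigorous, but it follows a genuinely different route from the paper's proof. The paper argues via a Gr\"onwall-type fixed-point: it sets $M(\lambda)=\limsup_n\bigl|\e[e^{-\lambda\dot Y}]-\e[e^{-\lambda\dot Z_n^Q/a_n^Q}]\bigr|$, uses the two-spine decomposition (Proposition~\ref{prop_desco_Zdosptos}) together with Lemma~\ref{Bs} and the comparison Lemma~\ref{lemmaRen3.1} to obtain $M(\lambda)\le 2\int_0^\lambda\!\int_0^1 M(us)\,du\,ds$, and concludes $M\equiv0$ by Lemma~\ref{lemmaRen3.2}. The exponential law is never computed; it is pinned down by the distributional equation~\eqref{eq_expone}.

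Your approach is a direct Laplace-transform computation: you expand $\log\prod_l f'_l(f_{l,n})/f'_l(1)$, use the telescoping identity for $1/(1-f_{l,n})$ with the shape functions $r_k$, and recognise the resulting sum as $2\lambda\,\e[(1+\lambda A_{n,K_n})^{-1}]$, which Proposition~\ref{Prop_unifor} converts into $2\log(1+\lambda)$. This is essentially Kersting's analytic machinery (the same $r_k$-telescope that underlies \eqref{limit a}) applied one derivative up, so it bypasses the two-spine construction entirely; Proposition~\ref{Prop_unifor} enters only as an analytic averaging statement about the weights $\nu_{m+1}/\mu_m$. What you gain is directness and an explicit identification of the limit as $\mathrm{Gamma}(2,1)$; what you lose is the probabilistic content that is the paper's raison d'\^etre. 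The uniformity you flag as ``the hard part'' is real but tractable: the log and MVT remainders are controlled by \eqref{eq_convunif4} and \eqref{eq_unifcon1} exactly as in Lemma~\ref{lemma g}, and the uniform bound $|r_{k+1}(f_{k+1,n}(s))-\nu_{k+1}/2|\le \varepsilon\,\nu_{k+1}$ for $n$ large follows from Condition~\eqref{eq_cond_mild} via the third-derivative estimate, after which the errors are dominated by $\varepsilon\sum_k\nu_{k+1}/\mu_k$ and hence are $o(a_n^Q/\mu_n)$ as required.
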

We have all the ingredients to prove Yaglom's Theorem under Assumption \eqref{eq_cond_mild}.

\begin{proof}[Proof of Theorem \ref{teo_yaglom}]
	According to Lemma \ref{remark 1},  in order to deduce Theorem \ref{teo_yaglom}, it  is enough to  show  that  \ $(a_n^Q)^{-1}\dot{Z}_n^Q\overset{(d)}{\longrightarrow}\dot{Y}$ \ and  \ $\e\left[(a_n^Q)^{-1}Z_n^Q\mid Z_n^Q>0\right]\longrightarrow 1$\ as \ $n\rightarrow \infty$,\  where \ $\dot{Y}$ \ is the size-biased transform of an exponential random variable. The first limit holds by Proposition \ref{limit Zdot}. For the second limit, we  observe that 
	$$\e\left[\left.\frac{Z_n^Q}{a_n^Q}\right| Z_n^Q>0\right]=\frac{\e\left[Z_n^Q\right]}{a_n^Q\p\left( Z_n^Q>0\right)}=\frac{\mu_n}{a_n^Q\p\left( Z_n^Q>0\right)},$$
	which goes to 1 according to \eqref{limit a}. Therefore, Theorem \ref{teo_yaglom} holds.
\end{proof}

This manuscript is complete as soon as we prove Propositions \ref{Prop_unifor} and \ref{limit Zdot}. We start with Proposition \ref{Prop_unifor}.

\begin{proof}[Proof of Proposition \ref{Prop_unifor}]
	In order to obtain this result, it is enough to deduce
	\begin{equation}
	\label{distF_n}
	\lim\limits_{n\to \infty} \p\left( A_{n,K_n}\leq y\right) = y, \qquad y\in[0,1].
	\end{equation} 
	Denote by \ $(\tilde{f}_1, \tilde{f}_2,\dots)$ \ the generating functions associated with the environment \ $Q_{m+1}$. \ They can be written in terms of the original environment as  \ $ \tilde{f}_k = f_{m+1+k}$, \ for \ $k\geq 1$. \
	Then, by definition
	$$\tilde{\mu}_k=f'_{m+2}(1) \cdots f_{m+1+k}'(1)
	= \frac{\mu_{m+1+k}}{\mu_{m+1}}\qquad \mbox{ and } \qquad 
	\tilde{\nu}_k =\frac{f''_{m+1+k}(1)}{f'_{m+1+k}(1)^2}= \nu_{m+1+k}.$$
	Hence,
	$$a^{Q_{m+1}}_{n-(m+1)} 
	= \frac{\tilde{\mu}_{n-(m+1)}}{2}\sum_{k=0}^{n-(m+1)-1} \frac{\tilde{\nu}_{k+1}}{\tilde{\mu}_{k}} 
	= \frac{\mu_{n}}{2}\sum_{j=m+1}^{n-1} \frac{\nu_{j+1}}{\mu_{j}},$$
	and
	\begin{equation*}
	\begin{split}
	A_{n,m}= & \frac{a^{Q_{m+1}}_{n-(m+1)} }{a^Q_n}=\sum_{j=m+1}^{n-1} \frac{\nu_{j+1}}{\mu _{j}} \left(\sum_{k=0}^{n-1} \frac{\nu_{k+1}}{\mu _{k}}\right)^{-1}
	= 1 - \sum_{j=0}^{m} \frac{\nu_{j+1}}{\mu _{j}} \left(\sum_{k=0}^{n-1} \frac{\nu_{k+1}}{\mu _{k}}\right)^{-1},
	\end{split}
	\end{equation*}
	where in the last equality, we completed the sum. Then, 
	\begin{equation}\label{eq_proba_Kn}
	\begin{split}
	\p\left(A_{n,K_n} = 1 - \sum_{j=0}^{m} \frac{\nu_{j+1}}{\mu _{j}} \left(\sum_{k=0}^{n-1} \frac{\nu_{k+1}}{\mu _{k}}\right)^{-1}\right)
	&=\p\left(A_{n,K_n}=A_{n,m}\right)=\p\left(K_n=m\right)\\
	&
	=\frac{\nu_{m+1}}{\mu_{m}}\left(\sum_{k=0}^{n-1}\frac{\nu_{k+1}}{\mu_{k}}\right)^{-1}.
	\end{split}
	\end{equation}
	
	Note that  \ $\{A_{n,m}: m= 0, \dots, n-1\}\subset[0, 1]$ \ is a decreasing sequence with \ $A_{n,n-1}=0$. \ Then, we can associate it to the partition \ $P^{(n)}= \{0= \Pi^{(n)}_0 < \Pi^{(n)}_1 < \ldots < \Pi_{n-1}^{(n)}<\Pi^{(n)}_n=1\}$ \  defined by \  $\Pi^{(n)}_k = A_{n,n-k-1}$, \ for any \ $0\leq k<n$, \ with \ $\Pi^{(n)}_n=1$.\ The norm of the partition is defined by 
	$$ || P^{(n)}|| = \max_{1\leq k \leq n} \left\{\Pi^{(n)}_k - \Pi^{(n)}_{k-1}\right\}=  \max_{0\leq  m \leq n-1} \left\{\frac{\nu_{m+1}}{\mu _{m}} \left(\sum_{k=0}^{n-1} \frac{\nu_{k+1}}{\mu _{k}}\right)^{-1}\right\}.$$
	
	Since \ $P^{(n)}$ \ is a partition, for each \ $y\in [0,1)$ \ there exists \ $l_n:= l(y,n)\in \{0,1,\ldots, n-1\}$ \ such that \ $\Pi_{l_n}^{(n)} \leq y  <\Pi_{l_n+1}^{(n)}.$ \ Then, by \eqref{eq_proba_Kn} 
	\begin{equation*}
	\p\left(A_{n,K_n}\leq y\right) =\sum_{k=0}^{l_n}\p\left(A_{n,K_n}=\Pi^{(n)}_k\right)=\sum_{m=n-l_n-1}^{n-1} \frac{\nu_{m+1}}{\mu_{m}}\left(\sum_{k=0}^{n-1}\frac{\nu_{k+1}}
	{\mu_{k}}\right)^{-1}=\Pi_{l_n+1}^{(n)}.
	\end{equation*}
	It is easy to deduce that in order to prove \eqref{distF_n}, we have to prove that \ $\Pi_{l_n+1}^{(n)}\rightarrow y$ \ as $n\rightarrow \infty$. \ We always choose \ $l_n$ \ such that \ $y\in[\Pi_{l_n}^{(n)},\Pi_{l_n+1}^{(n)})$. \ Therefore, it is enough to show that \ $|| P^{(n)}|| \rightarrow 0$\  as \ $n \rightarrow \infty$.
	
	From inequality \eqref{ecuacion 23}, we see that for each \ $1\leq n$,
	\begin{equation*}
	\begin{split}
	\frac{\nu_n}{\mu_{n-1}} \left(\sum_{k=0}^{n-1}\frac{\nu_{k+1}}{\mu_{k}}\right)^{-1} &= \frac{f_n''(1)}{f_{n}'(1)}	\left( \mu_n\sum_{k=0}^{n-1}\frac{\nu_{k+1}}{\mu_{k}}\right)^{-1} \leq c(1+f_n'(1)) \left(\mu_n\sum_{k=0}^{n-1}\frac{\nu_{k+1}}{\mu_{k}}\right)^{-1} \\ 
	&=   c \left(\mu_n\sum_{k=0}^{n-1}\frac{\nu_{k+1}}{\mu_{k}}\right)^{-1} + c \left(\mu_{n-1}\sum_{k=0}^{n-2}\frac{\nu_{k+1}}{\mu_{k}} + \nu_n\right)^{-1}.
	\end{split}
	\end{equation*}
	Since we are in the critical regime and \ $\nu_n\geq 0$ \ for all \ $n\geq 1$,\ both summands in the right-hand side of the last equality go to zero as \ $n\rightarrow \infty$. \ In other words, given \ $\epsilon>0$ \ there exists \ $N\geq 1$ \ such that
	\begin{equation}\label{eq_lim1}
	\frac{\nu_{m+1}}{\mu_{m}} \left(\sum_{k=0}^{n-1}\frac{\nu_{k+1}}{\mu_{k}}\right)^{-1} \leq\frac{\nu_{m+1}}{\mu_{m}} \left(\sum_{k=0}^{m}\frac{\nu_{k+1}}{\mu_{k}}\right)^{-1} \leq \epsilon, \qquad \qquad  \mbox{for any }\  N\leq m< n.
	\end{equation}
	On the other hand, by criticality, for any fixed \ $m\leq N$,  \ there is a $M_m\in \n$ such that 
	\begin{equation}\label{eq_lim2}
	\frac{\nu_{m+1}}{\mu_{m}} \left(\sum_{k=0}^{n-1}\frac{\nu_{k+1}}{\mu_{k}}\right)^{-1} \leq \epsilon, \qquad \qquad  \mbox{for any }\  n\geq M_m.
	\end{equation}
	We
	define \ $M= N\vee \max\{M_m: m\leq N\}$. \ Then, by \eqref{eq_lim1} and \eqref{eq_lim2}, for any \ $n\geq M$
	\begin{equation*}
	|| P^{(n)}|| =   \max_{0\leq  m <n} \left\{\frac{\nu_{m+1}}{\mu _{m}} \left(\sum_{k=0}^{n-1} \frac{\nu_{k+1}}{\mu _{k}}\right)^{-1}\right\} \leq  \epsilon,
	\end{equation*}
	and the claim is true.
\end{proof}

Now, we present a result whose relevance will become clear  in the proof of Proposition \ref{limit Zdot}. Intuitively, the first statement is an extension of the fact that \ $A_{n,K_n}$ \ converges in distribution to \ $U$. \ The limit of \ $B_3^{(n)}$ \ is an extension of the fact that \ $g\left(n,m,s/a_n^Q\right)$ \ converges uniformly to \ 1. \ For the purpose of seeing the intuition in the statement of \ $B_2^{(n)}$, \ we normalise \ $\dot{Z}_{n-(m+1)}^{Q_{m+1}}$ \ with the correct constant corresponding to the shifted environment. Then,  \ $B_2^{(n)}$ \ infers that at large times  the distributions of the processes  \ $\dot{Z}_n^Q/a_n^Q$ \ and \ $\dot{Z}_{n-(m+1)}^{Q_{m+1}}/a_{n-(m+1)}^{Q_{m+1}}$ \ do not vary much.

\begin{lem}\label{Bs}
	Let $Q$ be a varying environment satisfying Condition \eqref{eq_cond_mild} and \ $\{\dot{Z}^{\cdot}_n: n\geq 0\}$ \ be a  size-biased GWVE.  Define
	\begin{equation*}
	\begin{split}
	B_1^{(n)} &
	= \int_{0}^{\lambda} \left( \e\left[\expo{-sU \frac{\dot{Z}_n^Q}{a_n^Q}}\right]-\sum_{m=0}^{n-1} \p(K_n=m) \e\left[\expo{-sA_{n,m}\frac{\dot{Z}_n^Q}{a_n^Q}}\right]  \right)\ud s,\\
	B_2^{(n)} &
	= \int_{0}^{\lambda} \sum_{m=0}^{n-1} \p(K_n=m)  \left( \e\left[\expo{-sA_{n,m}\frac{\dot{Z}_n^Q}{a_n^Q}}\right]- \e\left[\expo{-s\frac{\dot{Z}_{n-(m+1)}^{Q_{m+1}}}{a_n^Q}}\right]  \right)\ud s,
	\end{split}
	\end{equation*}
	
	\begin{equation*}
	\begin{split}
	B_3^{(n)} &
	=\int_{0}^{\lambda} \sum_{m=0}^{n-1} \p(K_n=m)  \e\left[\expo{-s\frac{\dot{Z}_{n-(m+1)}^{Q_{m+1}}}{a_n^Q}}\right] \left(1-g\left(n,m,\frac{s}{a_n^Q}\right)\right)\ud s,
	\end{split}
	\end{equation*}
	where \ $U$ \ is an uniform random variable on \ $[0,1]$ \ independent of \ $\dot{Z}^Q$.	\ Then,
	\begin{equation*}
	\limsup_{n \to \infty} |B_1^{(n)}|  
	= \limsup_{n \to \infty} |B_2^{(n)}| = \limsup_{n \to \infty} |B_3^{(n)}| = 0.
	\end{equation*}
\end{lem}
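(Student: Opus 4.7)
I would treat each of $B_1^{(n)}$, $B_2^{(n)}$, $B_3^{(n)}$ separately.  $B_3^{(n)}$ is essentially immediate from Lemma \ref{lemma g}, $B_1^{(n)}$ follows from Proposition \ref{Prop_unifor} via a short Lipschitz--coupling argument, and $B_2^{(n)}$ is the main obstacle, requiring a careful analysis of the two Laplace transforms and a cancellation phenomenon.

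\smallskip

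\emph{Vanishing of $B_3^{(n)}$.}  Every Laplace transform inside the integrand is bounded by $1$ and the weights $\p(K_n=m)$ sum to one, so
\[
|B_3^{(n)}| \;\leq\; \lambda \sup_{0\leq m<n,\, s\in[0,\lambda]} \bigl|\,1-g(n,m,s/a_n^Q)\,\bigr|,
\]
which tends to zero by Lemma \ref{lemma g}.

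\smallskip

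\emph{Vanishing of $B_1^{(n)}$.}  Set $\phi_n(t):=\e[\exp(-t\dot Z_n^Q/a_n^Q)]$.  Since $U$ and $K_n$ are both independent of $\dot Z^Q$, the integrand equals $\e[\phi_n(sU)-\phi_n(sA_{n,K_n})]$.  From \eqref{eq_media_Zn} one obtains $\e[\dot Z_n^Q] = \e[(Z_n^Q)^2]/\mu_n = 2a_n^Q+1$, hence $\phi_n$ is Lipschitz on $[0,\infty)$ with constant $L_n:=2+(a_n^Q)^{-1}$, bounded uniformly in $n$.  From the proof of Proposition \ref{Prop_unifor} the law of $A_{n,K_n}$ is piecewise constant on the partition $P^{(n)}$ of $[0,1]$, whose mesh satisfies $\|P^{(n)}\|\to 0$; a quantile coupling of $U$ and $A_{n,K_n}$ yields $|U-A_{n,K_n}|\leq\|P^{(n)}\|$ almost surely.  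Combining,
\[
|B_1^{(n)}| \;\leq\; L_n\|P^{(n)}\|\int_0^\lambda s\,\ud s \;=\; \tfrac12 L_n\lambda^{2}\|P^{(n)}\| \;\longrightarrow\; 0.
\]

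\smallskip

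\emph{Vanishing of $B_2^{(n)}$ (the main obstacle).}  Using Lemma \ref{prop_relat} and the identity $s/a_{n-(m+1)}^{Q_{m+1}} = sA_{n,m}/a_n^Q$, each summand of $B_2^{(n)}$ reduces to
\[
D_{n,m}(s) \;:=\; \psi_n(sA_{n,m}) - \psi_{n,m+1}(sA_{n,m}),
\]
where $\psi_n(t):=\e[\exp(-t\dot Z_n^Q/a_n^Q)]$ and $\psi_{n,m+1}(t):=\e[\exp(-t\dot Z_{n-(m+1)}^{Q_{m+1}}/a_{n-(m+1)}^{Q_{m+1}})]$ are the Laplace transforms of the normalised size-biased processes in environments $Q$ and $Q_{m+1}$.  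The one-spine decomposition of $\dot{\mathbf T}$ up to generation $m+1$ splits $\dot Z_n^Q = A+B$ with $A\perp B$ and $B\stackrel{d}{=}\dot Z_{n-(m+1)}^{Q_{m+1}}$, which gives the factorisation $\psi_n(sA_{n,m}) = \alpha_{n,m}(s)\,\psi_{n,m+1}(sA_{n,m}^2)$ with $\alpha_{n,m}(s):=\e[\exp(-sA_{n,m}A/a_n^Q)]$; therefore
\[
D_{n,m}(s) = \bigl(\alpha_{n,m}(s)-1\bigr)\psi_{n,m+1}(sA_{n,m}^2) + \bigl(\psi_{n,m+1}(sA_{n,m}^2) - \psi_{n,m+1}(sA_{n,m})\bigr).
\]
I would then split $\{0,\ldots,n-1\} = \{m\leq N\}\cup\{N<m<n\}$ for a large but fixed $N$.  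The first range contributes at most $2\lambda\,\p(K_n\leq N) = 2\lambda\,T_{N+1}/T_n$ (with $T_k:=\sum_{j=0}^{k-1}\nu_{j+1}/\mu_j$) and vanishes; for $m>N$ I would control the second summand in the above decomposition by the Lipschitz constant of $\psi_{n,m+1}$ times $sA_{n,m}(1-A_{n,m})$, and bound $1-\alpha_{n,m}(s)$ by a Kersting-type estimate following the same lines as the proof of Lemma \ref{lemma g}.  The \emph{hard part} is that neither summand is pointwise small in $m$; they must partially cancel once averaged against $\p(K_n=m)$.  The algebraic identity $1/(1+su)^{2} = \bigl((1+su^{2})/(1+su)\bigr)^{2}\cdot 1/(1+su^{2})^{2}$, valid for the conjectured limit Laplace transform $1/(1+t)^{2}$ of the size-biased exponential $\dot Y$, shows that this cancellation is exact in the limit, and Condition \eqref{eq_cond_mild} supplies the regularity needed to turn this heuristic into a quantitative bound valid at every finite $n$.
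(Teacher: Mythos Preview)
Your treatments of $B_3^{(n)}$ and $B_1^{(n)}$ are correct.  For $B_1^{(n)}$ you take a different route from the paper: the paper expresses the integrand via $f'_{0,n}$, decomposes $[0,1]$ along the partition $P^{(n)}$, and bounds by a telescopic sum to reach $|b_1^{(n)}(s)|\le\|P^{(n)}\|$.  Your Lipschitz--quantile-coupling argument is cleaner and reaches essentially the same bound; both approaches work.

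The gap is in $B_2^{(n)}$.  You correctly identify that neither of your two summands $(\alpha_{n,m}(s)-1)\psi_{n,m+1}(sA_{n,m}^2)$ and $\psi_{n,m+1}(sA_{n,m}^2)-\psi_{n,m+1}(sA_{n,m})$ is small after averaging in $m$ --- indeed, to first order each is of size $\approx 2sA_{n,m}(1-A_{n,m})$, whose $K_n$-average tends to $s/3$ --- and then you stop at a heuristic (``this cancellation is exact in the limit'').  Turning this into a proof would require uniform second-order control of $\alpha_{n,m}$ and of $\psi_{n,m+1}$ near the conjectured limit $1/(1+t)^2$; the latter is essentially the content of Proposition~\ref{limit Zdot}, which this lemma is meant to feed, so you risk circularity.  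As written the argument is incomplete.

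The paper avoids the cancellation difficulty entirely by a different device: it integrates in $s$ \emph{first}.  Using the explicit formulas of Lemma~\ref{prop_relat}, one gets
\[
\int_0^\lambda b_2^{(n,m)}(s)\,\ud s
=\frac{1}{\mu_n}\left(\frac{a_n^Q}{A_{n,m}}\bigl(1-f_{0,n}(e^{-\lambda A_{n,m}/a_n^Q})\bigr)
-\mu_{m+1}a_n^Q\bigl(1-f_{m+1,n}(e^{-\lambda/a_n^Q})\bigr)\right),
\]
and a single application of the Mean Value Theorem to $f_{0,n}$ and $f_{m+1,n}$, together with the monotonicity of their derivatives and the inequality $\mu_{m+1}f'_{m+1,n}(e^{-\lambda/a_n^Q})\ge f'_{0,n}(e^{-\lambda/a_n^Q})$, sandwiches $B_2^{(n)}$ between two expressions $\widehat B_2^{(n)}$ and $\widetilde B_2^{(n)}$ that are each easily seen to tend to zero by dominated convergence.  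The point is that the $s$-integration converts derivatives $f'_{0,n},f'_{m+1,n}$ back into the functions $f_{0,n},f_{m+1,n}$, after which the two terms are directly comparable without any delicate cancellation.
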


\begin{proof}
	We start with \ $B_1^{(n)}$. Recall the partition \  $P^{(n)}= \{ \Pi^{(n)}_0 < \Pi^{(n)}_1 < \ldots < \Pi_{n-1}^{(n)}<\Pi_n^{(n)}\}$ \ given in the proof of Proposition \ref{Prop_unifor} and that \ $\p(K_n=m)=\Pi^{(n)}_{n-m} - \Pi^{(n)}_{n-m-1}$. \ Then 
	\begin{equation*}
	\begin{split}
	b_1^{(n)}(s):=
	& \e\left[\expo{-sU \frac{\dot{Z}_n^Q}{a_n^Q}}\right]-\sum_{m=0}^{n-1} \p(K_n=m) \e\left[\expo{-sA_{n,m}\frac{\dot{Z}_n^Q}{a_n^Q}}\right]  \\
	=& \int_{0}^{1}\e\left[\expo{-su \frac{\dot{Z}_n^Q}{a_n^Q}}\right] \ud u-\sum_{m=0}^{n-1} (\Pi^{(n)}_{n-m} - \Pi^{(n)}_{n-m-1}) \e\left[\expo{-sA_{n,m}\frac{\dot{Z}_n^Q}{a_n^Q}}\right]. 
	\end{split}
	\end{equation*}
	By decomposing \ $[0,1]$ \ into the subintervals \ $[\Pi^{(n)}_{n-m-1},\Pi^{(n)}_{n-m}],\ m=0,\ldots,n-1$,\ we get
	\begin{equation*}
	\begin{split}
	b_1^{(n)}(s)=& \sum_{m=0}^{n-1} \int_{\Pi^{(n)}_{n-m-1}}^{\Pi^{(n)}_{n-m}}\e\left[\expo{-su \frac{\dot{Z}_n^Q}{a_n^Q}}\right] \ud u- \sum_{m=0}^{n-1}\int_{\Pi^{(n)}_{n-m-1}}^{\Pi^{(n)}_{n-m}} \e\left[\expo{-sA_{n,m}\frac{\dot{Z}_n^Q}{a_n^Q}}\right]\ud u.
	\end{split}
	\end{equation*}
	Now, by Lemma \ref{prop_relat}, the Laplace transform of \ $\dot{Z}_n^Q$ \ can be expressed in terms of  \ $f_{0,n}'$. Since \ $x\mapsto f_{0,n}'(e^{-\lambda x})e^{-\lambda x}$ \ is a decreasing function, and \  $u, A_{n,m}\in[ \Pi_{n-m-1}^{(n)},\Pi^{(n)}_{n-m} ]$ \  for \ $m=0,\ldots, n-1$, we deduce
	\begin{equation*}
	\begin{split}
	|	b_1^{(n)}(s) |\leq
	&  \frac{1}{\mu_n}\sum_{m=0}^{n-1} \int_{\Pi^{(n)}_{n-m-1}}^{\Pi^{(n)}_{n-m}}
	\left| f_{0,n}'\left(e^{-su/a_n^Q}\right)e^{-su/a_n^Q}- f_{0,n}'\left(e^{-sA_{n,m}/a_n^Q}\right)e^{-sA_{n,m}/a_n^Q}  \right|\ud u\\ 
	\leq & \frac{1}{\mu_n} \sum_{m=0}^{n-1} (\Pi^{(n)}_{n-m}-\Pi^{(n)}_{n-m-1})\\
	&\hspace{1.5cm}\times 
	\left(f'_{0,n}\Bigg(e^{-s\Pi^{(n)}_{n-m-1}/a_n^Q}\Bigg)
	e^{-s\Pi^{(n)}_{n-m-1}/a_n^Q}
	-f'_{0,n}\Bigg(e^{-s\Pi^{(n)}_{n-m}/a_n^Q}\Bigg)
	e^{-s\Pi^{(n)}_{n-m}/a_n^Q} \right).
	\end{split}
	\end{equation*}
	The last sum can be bounded by the norm of the partition multiplied by a telescopic sum with \ $\Pi_0^{(n)}=0$ \ and \ $  \Pi_n^{(n)}=1$. \ Therefore
	$$|b_1^{(n)}(s)|\leq \frac{1}{\mu_n} || P^{(n)}||\left(f'_{0,n}(1) - f'_{0,n}\left(e^{-s/a_n^Q}\right) e^{-s/a_n^Q}\right)  \leq  \frac{1}{\mu_n} f'_{0,n}(1) || P^{(n)}||  = || P^{(n)}||.$$
	Since the norm of the partition goes to zero as $n\rightarrow \infty$ (see the proof of Proposition \ref{Prop_unifor}), we get the result for \ $B_1^{(n)}$,
	$$		\limsup_{n \to \infty} |B_1^{(n)}|\leq\limsup_{n \to \infty} \int_0^{\lambda}|b_1^{(n)}(s)|\ud s\leq 	\limsup_{n \to \infty}\lambda || P^{(n)}||=0.$$ 
	
	Now we deal with \ $B_2^{(n)}$.\ By Lemma \ref{prop_relat}, the Laplace transform of \ $\dot{Z}_n^Q$ \ and $\dot{Z}_{n-m-1}^{Q_{m+1}}$ \ can be expressed in terms of  \ $f_{0,n}'$ \ and \ $f_{m+1,n}'$, \ respectively. Then,
	\begin{equation*}
	\begin{split}
	b_{2}^{(n,m)}(s):=& \e\left[\expo{-sA_{n,m}\frac{\dot{Z}_n^Q}{a_n^Q}}\right]- \e\left[\expo{-s\frac{\dot{Z}_{n-(m+1)}^{Q_{m+1}}}{a_n^Q}}\right]\\
	= &\frac{1}{\mu_n}
	f_{0,n}'\left(e^{-sA_{n,m}/a_n^Q}\right)e^{-sA_{n,m}/a_n^Q}- \frac{\mu_{m+1}}{\mu_n}f_{m+1,n}'\left(e^{-s/a_{n}^Q}\right)e^{-s/a_{n}^Q}. 
	\end{split}   
	\end{equation*}
	Using first the Fundamental Theorem of Calculus and then the Mean Value Theorem in the functions \ $f_{0,n}$ \ and \ $f_{m+1,n}$, \ we deduce that  
	\begin{equation}\label{eq B2}
	\begin{split}
	&\int_0^{\lambda}b_{2}^{(n,m)}(s)\ud s\\
	& =\frac{1}{\mu_n}\left(\frac{a_n^Q}{A_{n,m}}\left(f_{0,n}(1)-f_{0,n}\left(e^{-\lambda A_{n,m}/a_n^Q}\right) \right) -
	\mu_{m+1}a_{n}^Q \left(f_{m+1,n}(1)-f_{m+1,n}\left( e^{-\lambda/a_n^Q}\right)\right)\right)
	\\  
	&=\frac{1}{\mu_n}\left(\frac{a_n^Q}{A_{n,m}}\left(1-e^{-\lambda A_{n,m}/a_n^Q}\right) f'_{0,n}(\xi)-
	\mu_{m+1}a_n^Q \left(1-e^{-\lambda/a_n^Q}\right) f'_{m+1,n}(\eta)\right),
	\end{split}
	\end{equation}
	where \ $\xi \in \left(e^{-\lambda A_{n,m}/a_n^Q},1\right)$ \ and \ $\eta \in \left(e^{-\lambda/a_n^Q},1\right)$. \ Now, we shall find \ $\widehat{B}_2^{(n)}$ \ and \ $\widetilde{B}_2^{(n)}$\ 
	such that  \ $\widehat{B}_2^{(n)}\rightarrow 0 $, \ $\widetilde{B}_2^{(n)}\rightarrow 0 $ \ as \ $n\rightarrow \infty$ \ and 
	\begin{equation}
	\label{intervalo}
	B_2^{(n)}   = \sum_{m=0}^{n-1}\p(K_n=m) \int_0^{\lambda}b_{2}^{(n,m)}(s)\ud s\in \left[\widehat{B}_2^{(n)}, \widetilde{B}_2^{(n)}\right].
	\end{equation}
	The fact that \ $f'_{0,m}$ \ and \ $f'_{m,n} $ \ are increasing functions and \eqref{eq B2} imply the following lower bound  
	\begin{equation*}\begin{split}
	B_2^{(n)}\geq  \widehat{B}_2^{(n)}:=&
	\sum_{m=0}^{n-1} \frac{\p(K_n=m)}{\mu_n} \left(\frac{a_n^Q}{A_{n,m}}\left(1-e^{-\lambda A_{n,m}/a_n^Q}\right) f'_{0,n}(e^{-\lambda A_{n,m}/a_n^Q})\right.\\
	&\hspace{3cm}
	\left.-\mu_{m+1}a_n^Q \left(1-e^{-\lambda/a_n^Q}\right) f'_{m+1,n}(1)\right).
	\end{split}
	\end{equation*}
	Since \ $\p(K_n=m)=\p(A_{n,K_n}=A_{n,m})$ \ and \ $\mu_{m+1}f'_{m+1,n}(1)=\mu_n$,\ we have
	\begin{equation*}\begin{split}\label{b2hat}
	\widehat{B}_2^{(n)}= \e\left[ \frac{a_n^Q}{A_{n,K_n}}\left(1-e^{-\lambda A_{n,K_n}/a_n^Q}\right)\frac{ f'_{0,n}\left(e^{-\lambda A_{n,K_n}/a_n^Q}\right)}{f'_{0,n}(1)} \right]-a_n^Q\left(1-e^{-\lambda/a_n^Q}\right).
	\end{split}
	\end{equation*}

	On the other hand, for the upper bound, observe that \ $\mu_{m+1}=f'_1(1)\cdots f'_{m+1}(1)$,\ that  \ $f_{l,n}(e^{-\lambda/a_n^Q}) \leq 1$ \ for any \ $1\leq l \leq m+1$, \ and  that \ $f_l'$ \ is an increasing function, then 
	$$
	\mu_{m+1}f'_{m+1,n}(e^{-\lambda/a_n^Q})\geq \underset{l=1}{\overset{m+1}{\prod}}f_l'\left(f_{l,n}\left(e^{-\lambda /a_n^Q}\right)\right)f'_{m+1,n}\left(e^{-\lambda /a_n^Q}\right)= f'_{0,n}\left(e^{-\lambda /a_n^Q}\right),$$
	where in the equality we use \eqref{formula f''}. By \eqref{eq B2} and \eqref{intervalo}, the previous inequality implies that 
	\begin{equation*}\begin{split}\label{b2tilde}
	B_2^{(n)}\leq 	\widetilde{B}_2^{(n)}:=&
	\sum_{m=0}^{n-1} \frac{\p(K_n=m)}{\mu_n} \left(\frac{a_n^Q}{A_{n,m}}\left(1-e^{-\lambda A_{n,m}/a_n^Q}\right) f'_{0,n}(1)\right.\\
	&\hspace{3cm}
	\left.-a_n^Q \left(1-e^{-\lambda/a_n^Q}\right) f'_{0,n}(e^{-\lambda/a_n^Q})\right)
	\\
	=&  \e\left[ \frac{a_n^Q}{A_{n,K_n}}\left(1-e^{-\lambda A_{n,K_n}/a_n^Q}\right) \right]-a_n^Q\left(1-e^{-\lambda/a_n^Q}\right) \frac{ f'_{0,n}\left(e^{-\lambda /a_n^Q}\right)}{f'_{0,n}(1)}.
	\end{split}
	\end{equation*}
	Then, \eqref{intervalo} holds. Now, we show that the limit of \  $\widehat{B}_2^{(n)}$ \ and \ $\widetilde{B}_2^{(n)}$ \ is zero as \ $n\rightarrow \infty$. \ Recall that \ $0\leq A_{n,K_n}\leq 1$ \ and \ $a_n^Q\rightarrow \infty$\ as\ $n\rightarrow \infty$, \ then as \ $n\rightarrow \infty$ 
	$$ a_n^Q\left(1-e^{-\frac{\lambda}{a_n^Q}}\right) \rightarrow \lambda, \qquad	 \qquad
	\frac{ f'_{0,n}\left(e^{-\lambda/a_n^Q}\right)}{f'_{0,n}(1)} \rightarrow 1,  $$
	and
	$$ \frac{a_n^Q}{A_{n,K_n}}\left(1-e^{-\lambda A_{n,K_n}/a_n^Q}\right) \rightarrow \lambda, \qquad \mbox{and} \qquad \frac{ f'_{0,n}\left(e^{-\lambda A_{n,K_n}/a_n^Q}\right)}{f'_{0,n}(1)} \rightarrow 1 \qquad \mbox{a.s.}$$
	By Dominated Convergence Theorem,  we have that \ $\widehat{B}_2^{(n)}\rightarrow 0 $ \ and \ $\widetilde{B}_2^{(n)}\rightarrow 0 $ \ as \ $n\rightarrow \infty$. \ Therefore, \ $B_2^{(n)}$ \ has the same behaviour.  Since the limit is zero we also have that  $|B_2^{(n)}| \rightarrow 0$ \ as  \ $n\to \infty$.
	
	Finally, we deal with \ $B_3^{(n)}$. \ Given an \ $\epsilon >0$, \ by Lemma \ref{lemma g}, there exists \ $M>0$ \ such that for \ $n\geq M,\ 0\leq s\leq \lambda$ \ and \ $0\leq m <n$
	$$ \left| g\left(n,m,\frac{s}{a_n^Q}\right) -1\right| \leq \epsilon.$$
	Hence, for $n\geq M$,
	$$|B_3^{(n)}| \leq  \int_{0}^{\lambda} \sum_{m=0}^{n-1} \p(K_n=m)   \left| g\left(n,m,\frac{s}{a_n^Q}\right)-1\right|\ud s \leq \epsilon \lambda.$$
	Since \ $\epsilon$ \ is arbitrary, we get that \ $\limsup_{n\to \infty}|B_3^{(n)}| = 0$. 
\end{proof}

For the proof of Proposition \ref{limit Zdot}, we need the following two lemmas, which the reader can find  in \cite[Lemma 3.1 and Lemma 3.2]{ren20172}. The first lemma compares the generating functions of two variables with the generating functions of their size-biased transforms. The second lemma is similar to Gr\"onwall's Lemma.

\begin{lem}\label{lemmaRen3.1}
	Let \ $X$ \ and \ $W$ \ be two non-negative random variables with mean \ $\mu$. \ Let \ $F$ \ and \ $G$ \ be functions such that \ $\e\left[e^{-\lambda \dot{X}}\right]=\e\left[e^{-\lambda X}\right]F(\lambda)$ \ and \ $\e\left[e^{-\lambda \dot{W}}\right]=\e[e^{-\lambda W}]G(\lambda)$, \ where \ $\dot{X}$ \ and \ $\dot{W}$ \ are the size-biased transforms of \ $X$ \ and \ $W$. \ Then, 
	$$\left|\e\left[e^{-\lambda X}\right]-\e\left[e^{-\lambda W}\right]\right|\leq\mu \left|\int_0^{\lambda} \left(F(s)-G(s)\right)\ud s\right|, \qquad \lambda\geq 0.$$
\end{lem}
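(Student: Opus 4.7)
My plan is to turn the hypothesis into a first-order linear ODE for the Laplace transform of $X$. Writing $\varphi_X(\lambda):=\e[e^{-\lambda X}]$ and $\varphi_W(\lambda):=\e[e^{-\lambda W}]$, I would first recall the defining property of the size-biased transform: for any bounded Borel $f$, $\e[f(\dot X)]=\mu^{-1}\e[Xf(X)]$, which applied to $f(x)=e^{-\lambda x}$ gives $\mu\,\e[e^{-\lambda\dot X}]=\e[Xe^{-\lambda X}]=-\varphi_X'(\lambda)$. Combined with the assumption $\e[e^{-\lambda\dot X}]=\varphi_X(\lambda)F(\lambda)$, this produces the ODE
$$\varphi_X'(\lambda)=-\mu\,F(\lambda)\,\varphi_X(\lambda),\qquad \varphi_X(0)=1,$$
whose unique solution is $\varphi_X(\lambda)=\exp\!\left\{-\mu\int_0^\lambda F(s)\ud s\right\}$. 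The same argument applied to $W$ yields $\varphi_W(\lambda)=\exp\!\left\{-\mu\int_0^\lambda G(s)\ud s\right\}$.

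Next I would note that both $F$ and $G$ are non-negative on $[0,\infty)$, since they are ratios of Laplace transforms of non-negative random variables (and the denominators $\varphi_X,\varphi_W$ are strictly positive on $[0,\infty)$ because $X,W\geq 0$). Hence the quantities $A:=\mu\int_0^\lambda F(s)\ud s$ and $B:=\mu\int_0^\lambda G(s)\ud s$ lie in $[0,\infty)$. Applying the elementary Lipschitz bound $|e^{-A}-e^{-B}|\leq |A-B|$ (immediate from the Mean Value Theorem, since the derivative of $t\mapsto e^{-t}$ is bounded by $1$ in absolute value on $[0,\infty)$), I conclude
$$\left|\varphi_X(\lambda)-\varphi_W(\lambda)\right|\leq |A-B|=\mu\left|\int_0^\lambda (F(s)-G(s))\ud s\right|,$$
which is exactly the claimed inequality.

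There is essentially no hard step here: the only things to verify with a little care are that differentiation under the expectation is legitimate (routine for $\lambda>0$ by dominated convergence, with $Xe^{-\lambda X}$ bounded by $X$ which is integrable thanks to $\e[X]=\mu<\infty$, and justified at $\lambda=0$ by a right-sided limit argument), and that $\varphi_X(\lambda)>0$ so that $F$ is well defined. The main conceptual point worth emphasising in the write-up is simply that the hypothesis on $F,G$ is exactly the statement that the Laplace transforms $\varphi_X,\varphi_W$ satisfy the same type of log-derivative equation, so their logarithms differ by $\mu\int_0^\lambda(G-F)$ and the Lipschitz bound on the exponential closes the argument.
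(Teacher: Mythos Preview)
Your argument is correct. The key observation that the hypothesis is equivalent to the ODE $\varphi_X'(\lambda)=-\mu F(\lambda)\varphi_X(\lambda)$, followed by the explicit solution $\varphi_X(\lambda)=\exp\{-\mu\int_0^\lambda F\}$ and the Lipschitz bound $|e^{-A}-e^{-B}|\le|A-B|$ on $[0,\infty)$, is exactly the right mechanism, and the technical points you flag (differentiation under the integral, positivity of $\varphi_X$, local boundedness of $F$) are all routine as you say.

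Note, however, that the paper does not actually supply its own proof of this lemma: it simply cites \cite[Lemma~3.1]{ren20172} and refers the reader there. So there is no ``paper's proof'' to compare against in this document. Your self-contained derivation is a perfectly acceptable replacement for that citation; it is the standard argument and matches what one finds in the reference.
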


\begin{lem}\label{lemmaRen3.2}
	Suppose that a non-negative bounded function \ $F$ \ on \ $[0,\infty)$ \ and a constant \ $c>0$ \ satisfy
	$$F(\lambda)\leq c \int_0^1\ud u\int_0^{\lambda}F(us)\ud s, \qquad \mbox{ for } \lambda\geq 0.$$
	Then \ $F\equiv 0$.
\end{lem}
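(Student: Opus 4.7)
My plan is to iterate the given inequality and show by induction on $n\geq 0$ that
\[
F(\lambda) \leq \frac{M(c\lambda)^n}{(n!)^2}, \qquad \lambda \geq 0,
\]
where $M := \sup_{t \geq 0} F(t) < \infty$ by hypothesis. The key structural observation is that the right-hand side of the assumed inequality smooths $F$ \emph{twice}: once via the $u$-integration on $[0,1]$ and once via the $s$-integration on $[0,\lambda]$. Consequently, each iteration contributes \emph{two} factors of $1/(n+1)$, which is what produces the squared factorial $(n!)^2$ in the denominator and forces the bound to decay to zero.

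First I would verify the base case $n=0$, which is immediate from the definition of $M$. For the inductive step, assuming the bound at level $n$ and substituting into the assumed inequality, I would compute
\[
F(\lambda) \leq c \int_0^1 \ud u \int_0^\lambda \frac{M(cus)^n}{(n!)^2}\ud s = \frac{Mc^{n+1}}{(n!)^2}\cdot\frac{1}{n+1}\cdot\frac{\lambda^{n+1}}{n+1} = \frac{M(c\lambda)^{n+1}}{((n+1)!)^2},
\]
using $\int_0^1 u^n \ud u = 1/(n+1)$ and $\int_0^\lambda s^n \ud s = \lambda^{n+1}/(n+1)$. This completes the induction.

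Finally I would let $n\to\infty$: for each fixed $\lambda\geq 0$, the quantity $M(c\lambda)^n/(n!)^2$ tends to zero (since $(n!)^2$ grows faster than any geometric sequence, by e.g.\ the ratio test), so $F(\lambda)=0$, and $F\equiv 0$ follows. I do not foresee any genuine obstacle here; the only subtlety is recognising that one should carry the full inductive bound $(cus)^n$ through the $u$-integration rather than dominating it by a constant. Dominating $F(us)\leq M$ crudely would yield the standard Grönwall-type bound $M(c\lambda)^n/n!$, which already suffices, but the cleaner $1/(n!)^2$ bound aligns naturally with the double integration built into the hypothesis.
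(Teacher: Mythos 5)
Your proof is correct. The paper itself does not prove this lemma but simply cites Ren, Song and Sun \cite[Lemma~3.2]{ren20172}, so there is no in-paper argument to compare against; your iteration argument, producing the bound $F(\lambda)\leq M(c\lambda)^n/(n!)^2$ by induction and then letting $n\to\infty$, is the standard Gr\"onwall-type proof and is exactly what one would expect the cited reference to do. Your closing remark is also accurate: the crude bound $F(us)\leq M$ already gives $M(c\lambda)^n/n!$, which suffices, so the squared factorial is an inessential (if pleasant) refinement coming from retaining the $u^n$ factor in the inner integral.
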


Finally, we present the last proof in this manuscript.
\begin{proof}[Proof of Proposition \ref{limit Zdot}]
	We define the bounded function 
	$$ M(\lambda) = \limsup_{n \to \infty}\left| \e[e^{-\lambda \dot{Y}}] -\e\left[\expo{-\lambda \frac{\dot{Z}_n^Q}{a_n^Q}}\right] \right|, \qquad \mbox{ for } \lambda\geq 0.$$
	We will use Lemma \ref{lemmaRen3.1} with \ $X=\dot{Y}$ \ and \ $W=(\dot{Z}_n^Q-1)/a_n^Q$. \ Since \ $Y$ \ is an exponential random variable and \eqref{eq_media_Zn} holds, we get
	\begin{equation*}
	\e[\dot{Y}]=2= \e\left[\frac{\dot{Z}_n^Q -1}{a_n^Q}\right].
	\end{equation*} 
	Thanks to characterisation \eqref{eq_expone}, we may choose \ $F(\lambda)=\e[e^{-\lambda U\dot{Y}}]$, \ where\  $U$\  is an uniform variable on \ $[0,1]$ \ independent of \  $\dot{Y}$. \ Then, by Proposition \ref{prop_desco_Zdosptos}, we have
	$$G(\lambda)=\sum_{m=0}^{n-1} \p(K_n = m) g\left(n,m, \frac{\lambda}{a^Q_n}\right)\e\left[\expo{-\lambda \frac{\dot{Z}_{n-(m+1)}^{Q_{m+1}}}{a_{n}^{Q}} } \right],$$
	where \ $g $ \ is given in \eqref{eq_g}. Hence, by Lemma \ref{lemmaRen3.1}  and the triangle inequality,  
	\begin{equation*}
	\begin{split}
	&\hspace{1cm}\Bigg| \e\left[e^{-\lambda \dot{Y}}\right]-\e\left[\expo{-\lambda \frac{\dot{Z}^Q_n-1}{a^Q_n}}\right]  \Bigg|\\
	&\hspace{1cm}\leq  2 \left| \int_{0}^{\lambda} \left(\e[e^{-s U\dot{Y}}]- \sum_{m=0}^{n-1} \p(K_n = m) g\left(n,m, \frac{s}{a^Q_n}\right)\e\left[\expo{-s \frac{\dot{Z}_{n-(m+1)}^{Q_{m+1}}}{a_{n}^{Q}} } \right]\right)
	\ud s\right|  \label{eq_descoZY}\\
	&\hspace{1cm} \leq2\left(|B_1^{(n)}|+|B_2^{(n)}|+|B_3^{(n)}|+|B_4^{(n)}|\right),
	\end{split}
	\end{equation*}
	where \ $  B_1^{(n)} , \ B_2^{(n)} $ \ and \ $B_3^{(n)} $\ are defined in Lemma \ref{Bs} and 
	$$B_4^{(n)} 
	= \int_{0}^{\lambda}  \left(\e\left[e^{-sU \dot{Y}}\right] - \e\left[\expo{-sU\frac{\dot{Z}_n^Q}{a_n^Q}}\right] \right)\ud s,$$
	with \ $U$ \ a uniform random variable on \ $[0,1]$ \ independent of \ $\dot{Y}$ \ and \ $\dot{Z}^Q$. \ Then, by Lemma \ref{Bs} and the Dominated Convergence Theorem, we obtain
	$$M(\lambda)\leq 2\limsup_{n \to \infty }\int_{0}^{\lambda} \left| 
	\e\left[e^{-sU \dot{Y}}\right]- \e\left[\expo{-sU\frac{\dot{Z}_n^Q}{a_n^Q}}\right]  \right|\ud s \leq 2\int_{0}^{\lambda}\int_{0}^{1} M(us)\ud u \ud s.$$
	By Lemma \ref{lemmaRen3.2}, \  $M\equiv 0$ \ which  implies that \ ${\dot{Z}_n^Q}/{a_n^Q}$ \ converges weakly to \ $\dot{Y}$. 
\end{proof}

\section*{Acknowledgements}
N.C.-T. acknowledges support from CONACyT-MEXICO grant no. 636133.  S.P. is a Newton International Fellow Alumnus (AL191032). S.P would like to thank Yan-Xia Ren and Zhenyao Sun for a discussion of their paper. We are grateful to Juan Carlos Pardo for his careful reading of an earlier version of this manuscript. This research is supported by UNAM-DGAPA-PAPIIT grant no. IA103220. The authors thank the referees for their careful reading of our paper and for their constructive comments.

\bibliographystyle{abbrv}
\bibliography{referencias}

\end{document}